\newtheorem{thm}{Theorem}[section]
\newtheorem{defn}[thm]{Definition}
\newtheorem{rem}[thm]{Remark}
\newtheorem{exm}[thm]{Example}
\journal{x}
\begin{document}

\begin{frontmatter}


\author{J. Mahanta 
}
 \ead{jm$\_$nerist@yahoo.in}
 \author{P. K. Das 
}
\ead{pkd$\_$ma@yahoo.cam}
\address{Department of Mathematics\\
NERIST, Nirjuli\\
Arunachal Pradesh, 791 109, INDIA.\\ }

\title{On soft topological space via semiopen and semiclosed soft sets}



\begin{abstract}
This paper introduces semiopen and semiclosed soft sets in soft topological spaces. The notions of interior and closure are generalized using these sets. A detail study is carried out on properties of semiopen, semiclosed soft sets, semi interior and semi closure of a soft set in a soft topological space. Various forms of soft functions, like semicontinuous, irresolute, semiopen soft functions are introduced and characterized. Further soft semicompactness, soft semiconnectedness and soft semiseparation axioms are introduced and studied.
\end{abstract}

\begin{keyword}
 
Soft topological space \sep semiopen soft set \sep soft semicompactness \sep soft semicontinuity \sep soft semiconnectedness. 
\MSC 06D72.

\end{keyword}

\end{frontmatter}
\section{Introduction and Preliminaries} 
Soft set was introduced by Molodtsov \cite{S1} in the year 1999. The notion of topological space for soft sets was formulated by Shabir et. al. \cite{S2}. Of late many authors have studied various properties of soft topological spaces.

This paper aims to introduce and give a detail study of semiopen soft set, semiclosed soft set, semicontunuity, semicompactness, semiconnectedness and semiseparation axioms.

Here are some definitions and results required in the sequel.
\begin{defn} \cite{S2}
 Let $\tau$ be a collection of soft sets over a universe $U$ with a fixed set $A$ of parameters, then $\tau \subseteq SS(U)_A$ 
is called a soft topology on $U$ with a fixed set $A$ if 
\begin{enumerate}[(i)]
\item $\Phi_A, U_A$ belongs to $\tau$;
\item the union of any number of soft sets in $\tau $ belongs to $\tau$;
\item the intersection of any two soft sets in $\tau $ belongs to $\tau$.
\end{enumerate}
Then $(U_A, \tau)$ is called a soft topological space over $U$.
\end{defn}

\begin{defn} \cite{S4}
A soft basis of a soft topological space $(U_A, \tau)$ is a subcollection $\mathcal{B}$ of open soft sets such that every element of $\tau$ can be expressed as the union of elements of $\mathcal{B}$.  

\end{defn}

\begin{defn} \cite{S4}
Let $(U_A, \tau)$ be a soft topological space and $U_B \overset{\sim }{\subseteq} U_A$. then the collection
${\tau}_{U_B}= \{ U_{A_i} \overset{\sim }{\bigcap} U_B~|~ U_{A_i} \in \tau, i\in I \subseteq \mathbb{N}\}$ is called a soft subspace topology on $U_B$.
\end{defn}

\begin{defn} \cite{S3}
A soft set $F_A \in SS(U)_A$ is called a soft point in $U_A$, denoted by $e_F$, if for each element $e \in A, F(e) \neq \Phi$ and $F(e^{'}) = \Phi$ for all $e^{'} \in A-\{e\}$.
\end{defn}

\begin{defn} \cite{S3}
A soft point $e_F$ in said to be in the soft set $G_A$, denoted by $e_F \overset{\sim}{\in} G_A$, if for some element $e \in A$ and $F(e)=G(e)$.
\end{defn}

\begin{defn} \cite{S3}
A family $\Psi $ of soft sets is a cover of a soft set $F_A$ if
 $F_ A \overset{\sim }{\subseteq} \overset{\sim }{\bigcup} \{(F_i)_ A ~|~ (F_i)_ A \in \Psi, i \in I\}$.
 
A subcover of $\Psi $ is a subfamily of $\Psi $ which is also a cover.  
\end{defn}
\begin{defn} \cite{S3}
A family $\Psi $ of soft sets has the finite intersection property (FIP) if the intersection of the members of each finite subfamily of $\Psi$ is not null soft set.
\end{defn}

Throughout this study, $F_A$ denotes a soft set, $(U_A, \tau)$ denotes a soft topological space.

\section{Semiopen and semiclosed soft sets}

In this section, we introduce semiopen and semiclosed soft sets and study various notions related to this structure.
\begin{defn}
In a soft topological space $(U_A, \tau)$, a soft set 
\begin{enumerate}[(i)]
 \item $G_ A$ is said to be semiopen soft set if $\exists$ an open soft set $H_A$ such that\\ $H_A \overset{\sim }{\subseteq} G_A \overset{\sim }{\subseteq} \overline{H_A}$;
\item $F_ A$ is said to be semiclosed soft set if $\exists$ a closed soft set $K_A$ such that\\ $K_A^0 \overset{\sim }{\subseteq} F_A \overset{\sim }{\subseteq} K_A$;
\end{enumerate}

\end{defn}

\begin{exm}
Consider the soft topological spaces $(U_A, \tau)$ as defined in Example 3 of \cite{S2}. Here $G_A(e_1)=\{h_1, h_2\}, G_A(e_2)=\{h_1\}$ is a semiopen soft set, as $F_{1A}$ is a open soft set such that $F_{1A} \overset{\sim }{\subseteq} G_A \overset{\sim }{\subseteq} \overline{F_{1A}}=F_{1A}$. 

$K_A(e_1)=\{h_3\}, K_A(e_2)=\{h_3\}$ is a semiclosed soft set, as $(F_{1A})^c$ is a closed soft set such that $((F_{1A})^c)^0 \overset{\sim }{\subseteq} K_A \overset{\sim }{\subseteq} (F_{1A})^c$.
\end{exm}

\begin{rem}
Every open (closed) soft set is a semiopen (semiclosed) soft set but not conversely. 
\end{rem}
\begin{rem}
$\Phi_A$ and $U_A$ are always semiclosed and semiopen.
\end{rem}

From now onwards, we shall denote the family of all semiopen soft sets (semiclosed soft sets) of a soft topological space $(U_A, \tau)$ by $SOSS(U)_A $ ($SCSS(U)_A)$.

\begin{thm}
Arbitrary union of semiopen soft sets is a semiopen soft set.
\end{thm}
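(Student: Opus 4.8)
The plan is to build the witnessing open soft set for the union out of the witnesses for the individual members. Let $\{(G_i)_A : i \in I\}$ be an arbitrary family of semiopen soft sets and write $G_A = \overset{\sim}{\bigcup}_{i\in I}(G_i)_A$. By definition each $(G_i)_A$ comes equipped with an open soft set $(H_i)_A$ satisfying $(H_i)_A \overset{\sim}{\subseteq} (G_i)_A \overset{\sim}{\subseteq} \overline{(H_i)_A}$. I would take $H_A = \overset{\sim}{\bigcup}_{i\in I}(H_i)_A$ as the candidate witness and verify the two inclusions $H_A \overset{\sim}{\subseteq} G_A \overset{\sim}{\subseteq} \overline{H_A}$.

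First, $H_A$ is open: it is an arbitrary union of open soft sets, hence lies in $\tau$ by axiom (ii) in the definition of a soft topology. For the left inclusion, $(H_i)_A \overset{\sim}{\subseteq} (G_i)_A \overset{\sim}{\subseteq} G_A$ for every $i$, so taking the soft union over $i$ gives $H_A \overset{\sim}{\subseteq} G_A$ immediately.

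The upper inclusion is where the one substantive property enters. Since $(H_i)_A \overset{\sim}{\subseteq} H_A$, monotonicity of the soft closure gives $\overline{(H_i)_A} \overset{\sim}{\subseteq} \overline{H_A}$, and combining this with $(G_i)_A \overset{\sim}{\subseteq} \overline{(H_i)_A}$ yields $(G_i)_A \overset{\sim}{\subseteq} \overline{H_A}$ for every $i$. Taking the union over $i$ then gives $G_A \overset{\sim}{\subseteq} \overline{H_A}$, completing the chain $H_A \overset{\sim}{\subseteq} G_A \overset{\sim}{\subseteq} \overline{H_A}$ and showing that $G_A$ is a semiopen soft set.

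The main obstacle, such as it is, is the monotonicity of the soft closure operator; everything else is routine bookkeeping with soft unions. If monotonicity is not available as a prior result, I would justify it directly from the definition of closure as the smallest closed soft set containing a given set: $\overline{H_A}$ is a closed soft set containing each $(H_i)_A$, and since $\overline{(H_i)_A}$ is the smallest closed soft set containing $(H_i)_A$, it follows that $\overline{(H_i)_A} \overset{\sim}{\subseteq} \overline{H_A}$, as required.
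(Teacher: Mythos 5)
Your proof is correct and follows essentially the same route as the paper's: take the union of the witnessing open soft sets $(H_i)_A$ and verify the chain $H_A \overset{\sim}{\subseteq} G_A \overset{\sim}{\subseteq} \overline{H_A}$. The only difference is that you spell out the monotonicity-of-closure step that the paper leaves implicit, which is a welcome addition but not a different argument.
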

\begin{proof}
Let $\{(G_A)_\lambda~|~ \lambda \in \Lambda\}$ be a collection of semiopen soft sets of a soft topological space $(U_A, \tau)$. Then $\exists$ an open soft sets $(H_A)_\lambda$ such that $(H_A)_\lambda \overset{\sim }{\subseteq} (G_A)_\lambda \overset{\sim }{\subseteq} \overline{(H_A)_\lambda}$ for each $\lambda$; hence $\overset{\sim }{\bigcup}(H_A)_\lambda \overset{\sim }{\subseteq} \overset{\sim }{\bigcup}(G_A)_\lambda \overset{\sim }{\subseteq}  \overline{\overset{\sim }{\bigcup}(H_A)_\lambda}$ and $\overset{\sim }{\bigcup}(H_A)_\lambda$ is open soft set. So, it is concluded that $\overset{\sim }{\bigcup}(G_A)_\lambda$ is a semiopen soft set.
\end{proof}

\begin{rem}
Arbitrary intersection of semiclosed soft sets is a semiclosed soft set.
\end{rem}
\begin{thm}
If a semiopen soft set $G_A$ is such that $G_A \overset{\sim }{\subseteq} K_A \overset{\sim }{\subseteq} \overline {G_A} $, then $K_A$ is also semiopen.
\end{thm}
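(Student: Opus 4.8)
The plan is to reuse the very open soft set that witnesses the semiopenness of $G_A$ as the witness for $K_A$ as well. Since $G_A$ is semiopen, the definition supplies an open soft set $H_A$ with $H_A \overset{\sim }{\subseteq} G_A \overset{\sim }{\subseteq} \overline{H_A}$. I would claim that this same $H_A$ certifies $K_A$ as semiopen, i.e.\ that $H_A \overset{\sim }{\subseteq} K_A \overset{\sim }{\subseteq} \overline{H_A}$, and then simply verify the two inclusions.

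The left inclusion is immediate: chaining $H_A \overset{\sim }{\subseteq} G_A$ (from semiopenness of $G_A$) with the hypothesis $G_A \overset{\sim }{\subseteq} K_A$ gives $H_A \overset{\sim }{\subseteq} K_A$. The right inclusion is the only place that needs a genuine argument. Starting from $G_A \overset{\sim }{\subseteq} \overline{H_A}$ and applying monotonicity of the soft closure, I obtain $\overline{G_A} \overset{\sim }{\subseteq} \overline{\overline{H_A}}$; since $\overline{H_A}$ is already soft closed, idempotence of the closure gives $\overline{\overline{H_A}} = \overline{H_A}$, so $\overline{G_A} \overset{\sim }{\subseteq} \overline{H_A}$. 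Combining this with the hypothesis $K_A \overset{\sim }{\subseteq} \overline{G_A}$ yields $K_A \overset{\sim }{\subseteq} \overline{G_A} \overset{\sim }{\subseteq} \overline{H_A}$, which is precisely the right inclusion.

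Putting the two inclusions together produces $H_A \overset{\sim }{\subseteq} K_A \overset{\sim }{\subseteq} \overline{H_A}$ with $H_A$ open, so $K_A$ is semiopen, as required. The proof uses nothing beyond the definition of semiopen soft set together with two standard properties of the soft closure operator: monotonicity and idempotence. I expect these to be the only ``obstacle,'' and only in the mild sense that they should be cited from the underlying theory of soft topological spaces (the references used earlier in the preliminaries) rather than re-established here; the logical core of the argument is the single step $\overline{G_A}\overset{\sim }{\subseteq}\overline{H_A}$.
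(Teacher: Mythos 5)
Your proposal is correct and is essentially the paper's own proof: both reuse the witness $H_A$ for $G_A$, obtain $H_A \overset{\sim }{\subseteq} K_A$ by chaining $H_A \overset{\sim }{\subseteq} G_A \overset{\sim }{\subseteq} K_A$, and obtain $K_A \overset{\sim }{\subseteq} \overline{G_A} \overset{\sim }{\subseteq} \overline{H_A}$ from the hypothesis together with $\overline{G_A} \overset{\sim }{\subseteq} \overline{H_A}$. The only difference is cosmetic: you justify the step $\overline{G_A} \overset{\sim }{\subseteq} \overline{H_A}$ explicitly via monotonicity and idempotence of the soft closure, whereas the paper asserts it without comment.
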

\begin{proof}
As $G_A$ is semiopen soft set $\exists$ an open soft set $H_A$ such that $H_A \overset{\sim }{\subseteq} G_A \overset{\sim }{\subseteq} \overline{H_A}$; then by hypothesis $H_A \overset{\sim }{\subseteq}  K_A$ and $\overline{G_A} \overset{\sim }{\subseteq} \overline{H_A} \Rightarrow K_A \overset{\sim }{\subseteq} \overline{G_A} \overset{\sim }{\subseteq} \overline{H_A} $ i.e., $H_A \overset{\sim }{\subseteq} K_A \overset{\sim }{\subseteq} \overline{H_A}$, hence $K_A$ is a semiopen soft set.
\end{proof}
\begin{thm}
If a semiclosed soft set $F_A$ is such that $(F_A)^0 \overset{\sim }{\subseteq} K_A \overset{\sim }{\subseteq} F_A $, then $K_A$ is also semiclosed.
\end{thm}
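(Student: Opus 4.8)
The plan is to exploit the evident duality with the preceding theorem for semiopen soft sets and to reuse the very closed soft set that witnesses the semiclosedness of $F_A$. Since $F_A$ is a semiclosed soft set, there exists a closed soft set $C_A$ with $(C_A)^0 \overset{\sim }{\subseteq} F_A \overset{\sim }{\subseteq} C_A$. My claim is that this same $C_A$ serves as the witness for $K_A$ as well; that is, I would verify the two inclusions $(C_A)^0 \overset{\sim }{\subseteq} K_A \overset{\sim }{\subseteq} C_A$, which is precisely what it means for $K_A$ to be semiclosed.

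The upper inclusion is immediate by chaining: the hypothesis gives $K_A \overset{\sim }{\subseteq} F_A$, and the choice of $C_A$ gives $F_A \overset{\sim }{\subseteq} C_A$, so $K_A \overset{\sim }{\subseteq} C_A$. This is the dual counterpart of the step $H_A \overset{\sim }{\subseteq} K_A$ in the semiopen proof and requires no further input.

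For the lower inclusion I would pass to interiors in the relation $(C_A)^0 \overset{\sim }{\subseteq} F_A$. Using monotonicity of the soft interior operator together with idempotency on the open soft set $(C_A)^0$, one obtains $(C_A)^0 = ((C_A)^0)^0 \overset{\sim }{\subseteq} (F_A)^0$. Combining this with the hypothesis $(F_A)^0 \overset{\sim }{\subseteq} K_A$ yields $(C_A)^0 \overset{\sim }{\subseteq} K_A$. This is the only step that is not a direct substitution, and it is exactly the dual of the closure argument $\overline{G_A} \overset{\sim }{\subseteq} \overline{H_A}$ used in the semiopen case; the mild obstacle is to notice that one must apply the interior to the given inclusion rather than merely concatenate the hypotheses.

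Putting the two inclusions together gives $(C_A)^0 \overset{\sim }{\subseteq} K_A \overset{\sim }{\subseteq} C_A$ with $C_A$ closed, so by the definition of semiclosed soft set $K_A$ is semiclosed, completing the argument.
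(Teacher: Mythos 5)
Your proof is correct: the paper states this theorem without proof, intending it as the dual of the preceding semiopen result, and your argument is exactly that dualization (reuse the closed witness $C_A$, chain $K_A \overset{\sim}{\subseteq} F_A \overset{\sim}{\subseteq} C_A$, and use monotonicity plus idempotency of the interior to get $(C_A)^0 \overset{\sim}{\subseteq} (F_A)^0 \overset{\sim}{\subseteq} K_A$). This matches the approach of the paper's proof of the semiopen version step for step, so nothing further is needed.
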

\begin{thm}
A soft set $G_A \in SOSS(U)_A \Leftrightarrow$ for every soft point $e_G \overset{\sim}{\in} G_A, \exists$ a soft set $H_A \in SOSS(U)_A$ such that $e_G \overset{\sim}{\in} H_A \overset{\sim }{\subseteq}G_A$.
\end{thm}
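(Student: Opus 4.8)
The plan is to prove the two implications separately: the forward direction is immediate, while the reverse direction rests on recovering $G_A$ as the union of the local semiopen soft sets supplied by the hypothesis and then invoking Theorem 2.1.

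For the forward implication ($\Rightarrow$), suppose $G_A \in SOSS(U)_A$. Then for any soft point $e_G \overset{\sim}{\in} G_A$ it suffices to take $H_A = G_A$ itself: this is a semiopen soft set satisfying $e_G \overset{\sim}{\in} H_A \overset{\sim}{\subseteq} G_A$, so the right-hand condition holds trivially.

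For the reverse implication ($\Leftarrow$), assume that to each soft point $e_G \overset{\sim}{\in} G_A$ there is associated a semiopen soft set $(H_A)_{e_G}$ with $e_G \overset{\sim}{\in} (H_A)_{e_G} \overset{\sim}{\subseteq} G_A$. First I would form the union
\[
\overset{\sim}{\bigcup}\{ (H_A)_{e_G} ~|~ e_G \overset{\sim}{\in} G_A \}.
\]
Since every $(H_A)_{e_G} \overset{\sim}{\subseteq} G_A$, this union is contained in $G_A$. For the reverse containment I would argue that $G_A$ coincides with the union of all its soft points: at each parameter $e \in A$, the soft point determined by the value $G(e)$ lies in $G_A$ and contributes exactly $G(e)$, so collecting these reconstructs $G_A$ parameterwise. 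Because each such soft point belongs to its associated $(H_A)_{e_G}$, we obtain $G_A \overset{\sim}{\subseteq} \overset{\sim}{\bigcup}(H_A)_{e_G}$, and hence equality. Finally, Theorem 2.1 shows that this arbitrary union of semiopen soft sets is itself a semiopen soft set, so $G_A \in SOSS(U)_A$.

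The main obstacle I expect is the reconstruction step, namely verifying rigorously that a soft set equals the union of its soft points under the membership convention of Definition 1.5 (where $e_F \overset{\sim}{\in} G_A$ requires $F(e)=G(e)$ for some $e$). One must check that the soft points lying in $G_A$ are precisely those built from the values $G(e)$ and that their union returns $G_A$ at every parameter; the degenerate case $G_A = \Phi_A$ is already covered by Remark 2.3. Once the reconstruction is settled, the containment $G_A \overset{\sim}{\subseteq} \overset{\sim}{\bigcup}(H_A)_{e_G}$ and the appeal to Theorem 2.1 are routine.
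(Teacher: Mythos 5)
Your proposal is correct and follows essentially the same route as the paper: the forward direction by taking $H_A = G_A$, and the reverse direction by writing $G_A$ as the union of its soft points, sandwiching $G_A \overset{\sim}{\subseteq} \overset{\sim}{\bigcup}(H_A)_{e_G} \overset{\sim}{\subseteq} G_A$, and concluding via the theorem that arbitrary unions of semiopen soft sets are semiopen. The reconstruction step you flag as a potential obstacle is precisely the equality $G_A = \overset{\sim}{\underset{e_G \overset{\sim}{\in} G_A}{\bigcup}}(e_G)$ that the paper asserts without further justification, so your version is if anything slightly more careful on the same point.
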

\begin{proof}

$(\Rightarrow)$ Take $H_A=G_A$.

$(\Leftarrow)$ $G_A= \overset{\sim}{\underset{e_G \overset{\sim}{\in} G_A}{\bigcup}}(e_G) \overset{\sim}{\subseteq} \overset{\sim}{\underset{e_G \overset{\sim}{\in} G_A}{\bigcup}}H_A \overset{\sim}{\subseteq} G_A$.
\end{proof}

\begin{defn}
Let $(U_A, \tau)$ be a soft topological space and $G_A$ be a soft set over $U$.
\begin{enumerate} [(i)]
\item The soft semi closure of $G_A$ is a soft set\\ 
$ssclG_A= \overset{\sim }{\bigcap} \{S_A~|~ G_A \overset{\sim }{\subseteq} S_A$ and $ S_A \in SCSS(U)_A \}$;
\item The soft semi interior of $G_A$ is a soft set\\ 
$ssintG_A= \overset{\sim }{\bigcup} \{S_A~|~ S_A \overset{\sim }{\subseteq} G_A$ and $ S_A \in SOSS(U)_A \}$.
\end{enumerate}
\end{defn}

$ssclG_A$ is the smallest semiclosed soft set containing $G_A$ and $ssintG_A$ is the largest semiopen set contained in $G_A$.

\begin{thm}
Let $(U_A, \tau)$ be a soft topological space and $G_A$ and $K_A$ be two soft sets over $U$, then 
\begin{enumerate}[(i)]
\item $ G_A \in SCSS(U)_A  \Leftrightarrow G_A= ssclG_A$;
\item $ G_A \in SOSS(U)_A  \Leftrightarrow G_A= ssintG_A$;
\item $(ssclG_A)^c = ssint(G_A^c)$;
\item $(ssintG_A)^c = sscl(G_A^c)$;
\item $G_A \overset{\sim }{\subseteq} K_A \Rightarrow ssintG_A \overset{\sim }{\subseteq} ssintK_A$;
\item $G_A \overset{\sim }{\subseteq} K_A \Rightarrow ssclG_A \overset{\sim }{\subseteq} ssclK_A$;
\item $sscl \Phi _A = \Phi _A$ and $sscl U _A = U _A$;
\item $ssint \Phi _A = \Phi _A$ and $ssint U _A = U _A$;
\item $sscl(G_A \overset{\sim }{\cup} K_A) = ssclG_A \overset{\sim }{\cup}  ssclK_A$;
\item $ssint(G_A \overset{\sim }{\cap} K_A) = ssintG_A \overset{\sim }{\cap}  ssintK_A$;
\item $sscl(G_A \overset{\sim }{\cap} K_A) \overset{\sim }{\subset} ssclG_A \overset{\sim }{\cap}  ssclK_A$;
\item $ssint(G_A \overset{\sim }{\cup} K_A) \overset{\sim }{\subset} ssintG_A \overset{\sim }{\cup}  ssintK_A$;
\item $sscl(ssclG_A)=ssclG_A$;
\item $ssint(ssintG_A)=ssintG_A$.
\end{enumerate}
\end{thm}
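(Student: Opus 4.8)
The plan rests on two structural facts already in hand: arbitrary unions of semiopen soft sets are semiopen (Theorem 2.1) and, dually, arbitrary intersections of semiclosed soft sets are semiclosed (the Remark following it). Hence $ssclG_A$ is the smallest semiclosed soft set containing $G_A$ and $ssintG_A$ the largest semiopen soft set inside $G_A$, as noted after the definition; in particular each is itself semiclosed, resp.\ semiopen. The single lemma I would prove first is the complementation duality $G_A \in SOSS(U)_A \Leftrightarrow (G_A)^c \in SCSS(U)_A$: from an open $H_A$ with $H_A \overset{\sim }{\subseteq} G_A \overset{\sim }{\subseteq} \overline{H_A}$, taking complements and using $(\overline{H_A})^c = ((H_A)^c)^0$ yields $((H_A)^c)^0 \overset{\sim }{\subseteq} (G_A)^c \overset{\sim }{\subseteq} (H_A)^c$ with $(H_A)^c$ closed, so $(G_A)^c$ is semiclosed, and the converse is symmetric.

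Given this, the ``defining'' items are immediate. For (i), a semiclosed $G_A$ is one of the sets in the intersection defining $ssclG_A$, forcing $ssclG_A = G_A$; conversely $ssclG_A$ is always semiclosed. Item (ii) is the dual via unions of semiopen sets. Items (vii) and (viii) then follow from (i), (ii) and the Remark that $\Phi_A, U_A$ are semiclosed and semiopen, and (xiii), (xiv) are idempotency: $ssclG_A$ is semiclosed, so (i) gives $sscl(ssclG_A) = ssclG_A$, and dually for $ssint$ through (ii).

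For (iii) I would apply De Morgan to the defining intersection, $(ssclG_A)^c = \overset{\sim }{\bigcup}\{(S_A)^c \mid G_A \overset{\sim }{\subseteq} S_A,\ S_A \in SCSS(U)_A\}$, and reindex by $T_A = (S_A)^c$; the duality lemma turns $S_A \in SCSS(U)_A$ into $T_A \in SOSS(U)_A$ while $G_A \overset{\sim }{\subseteq} S_A$ becomes $T_A \overset{\sim }{\subseteq} (G_A)^c$, so the union is exactly $ssint((G_A)^c)$. Item (iv) follows by complementing (iii). Monotonicity (v) holds because $G_A \overset{\sim }{\subseteq} K_A$ makes the family defining $ssintG_A$ a subfamily of that defining $ssintK_A$; (vi) is the dual for the intersection defining $sscl$.

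The lattice items (ix)--(xii) split into an easy half and the \textbf{main obstacle}. Pure monotonicity gives (xi), namely $sscl(G_A \overset{\sim }{\cap} K_A) \overset{\sim }{\subseteq} ssclG_A \overset{\sim }{\cap} ssclK_A$ from (vi), the analogous containment for $ssint$ of a union recorded in (xii), and one inclusion of each equality, $ssclG_A \overset{\sim }{\cup} ssclK_A \overset{\sim }{\subseteq} sscl(G_A \overset{\sim }{\cup} K_A)$ and $ssint(G_A \overset{\sim }{\cap} K_A) \overset{\sim }{\subseteq} ssintG_A \overset{\sim }{\cap} ssintK_A$. The hard part is the reverse inclusion in (ix) and (x): a finite union of semiclosed soft sets need not be semiclosed and a finite intersection of semiopen soft sets need not be semiopen, so $ssclG_A \overset{\sim }{\cup} ssclK_A$ is not visibly a semiclosed soft set containing $G_A \overset{\sim }{\cup} K_A$. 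To force it through I would develop a soft-point criterion ($e_G \overset{\sim}{\in} ssclG_A$ iff every semiopen soft set containing $e_G$ meets $G_A$) and argue pointwise; this is the one step I expect to resist a routine argument.
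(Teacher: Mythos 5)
Your handling of (i)--(viii), (xiii)--(xiv), the De Morgan computation for (iii)--(iv), monotonicity for (v)--(vi), and the easy inclusions in (ix)--(xii) coincides with the paper's proof, so the only point at issue is the step you yourself flag as the main obstacle: the reverse inclusion $sscl(G_A \overset{\sim }{\cup} K_A) \overset{\sim }{\subseteq} ssclG_A \overset{\sim }{\cup} ssclK_A$ in (ix) and its dual in (x). The paper closes this step by asserting that $ssclG_A, ssclK_A \in SCSS(U)_A$ implies $ssclG_A \overset{\sim }{\cup} ssclK_A \in SCSS(U)_A$, and then invoking minimality of $sscl(G_A \overset{\sim }{\cup} K_A)$. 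But that assertion --- that a union of two semiclosed soft sets is semiclosed --- is supported by nothing in the paper (Remark 2.2 gives only arbitrary \emph{intersections} of semiclosed soft sets) and is false in general. So the one step you could not fill is precisely the step at which the paper's own proof is wrong.

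In fact (ix) and (x) are false as stated, so neither your soft-point criterion nor any other device can complete the argument. Counterexample: let $A=\{e\}$, $U=\{a,b,c\}$, and let $\tau$ consist of $\Phi_A$, $U_A$ and the soft sets $e\mapsto\{a\}$, $e\mapsto\{c\}$, $e\mapsto\{a,c\}$. The soft sets $G_A : e\mapsto\{a\}$ and $K_A : e\mapsto\{c\}$ are semiclosed, so $ssclG_A \overset{\sim }{\cup} ssclK_A = G_A \overset{\sim }{\cup} K_A$ is the soft set $e\mapsto\{a,c\}$; but the only closed soft set containing $e\mapsto\{a,c\}$ is $U_A$, whose soft interior is $U_A$, so $e\mapsto\{a,c\}$ is not semiclosed, and indeed the only semiclosed soft set containing it is $U_A$, whence $sscl(G_A \overset{\sim }{\cup} K_A) = U_A \neq G_A \overset{\sim }{\cup} K_A$. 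Complementation gives the failure of (x): $ssint(G_A^c \overset{\sim }{\cap} K_A^c) = ssint(e\mapsto\{b\}) = \Phi_A$, while $ssint(G_A^c) \overset{\sim }{\cap} ssint(K_A^c)$ is $e\mapsto\{b\}$. Your diagnosis of where a pointwise argument would struggle is also exactly right: it would need the intersection of two semiopen soft sets containing a given soft point to be semiopen, and finite intersections of semiopen soft sets need not be semiopen (the same failure as above, in complemented form). So your instinct that this step ``resists a routine argument'' is correct --- it resists because it is false; the theorem is only salvageable with (ix) weakened to $ssclG_A \overset{\sim }{\cup} ssclK_A \overset{\sim }{\subseteq} sscl(G_A \overset{\sim }{\cup} K_A)$ and (x) to $ssint(G_A \overset{\sim }{\cap} K_A) \overset{\sim }{\subseteq} ssintG_A \overset{\sim }{\cap} ssintK_A$, which are the inclusions both you and the paper establish correctly.
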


\begin{proof} Let $G_A$ and $K_A$ be two soft sets over $U$.
\begin{enumerate}[(i)]
\item Let $G_A$ be a semiclosed soft set. Then it is the smallest semiclosed set containing itself and hence $G_A= ssclG_A$. 

On the other hand, let $G_A= ssclG_A$ and $ssclG_A \in SCSS(U) \Rightarrow G_A \in SCSS(U)$.
\item Similar to $(i)$.
\item \begin{eqnarray*}
(ssclG_A)^c & = & (\overset{\sim }{\bigcap} \{S_A~|~ G_A \overset{\sim }{\subseteq} S_A and  S_A \in SCSS(U)_A \})^c \\
&=& \overset{\sim }{\bigcup} \{S_A^c~|~ G_A \overset{\sim }{\subseteq} S_A and  S_A \in SCSS(U)_A \} \\
&= &\overset{\sim }{\bigcup} \{S_A^c~|~ S_A^c \overset{\sim }{\subseteq} G_A^c and  S_A^c \in SOSS(U)_A \} \\ 
&=& ssint(G_A^c).
\end{eqnarray*}
\item Similar to $(iii)$.
\item Follows from definiton.
\item Follows from definition.
\item Since $\Phi _A$ and $U_A$ are semiclosed soft sets so $sscl \Phi _A = \Phi _A$ and $sscl U _A = U _A$.  
\item Since $\Phi _A$ and $U_A$ are semiopen soft sets so $ssint \Phi _A = \Phi _A$ and $ssint U _A = U _A$. 
\item We have $G_A \overset{\sim }{\subset} G_A \overset{\sim }{\bigcup} K_A$ and  $K_A \overset{\sim }{\subset} G_A \overset{\sim }{\bigcup} K_A$. Then by $(vi), ssclG_A \overset{\sim }{\subset} sscl(G_A \overset{\sim }{\bigcup} K_A)$ and $ssclK_A \overset{\sim }{\subset} sscl(G_A \overset{\sim }{\bigcup} K_A) \Rightarrow ssclK_A\overset{\sim }{\bigcup} ssclG_A \overset{\sim }{\subset} sscl(G_A \overset{\sim }{\bigcup} K_A)$. 

Now, $ssclG_A, ssclK_A \in SCSS(U)_A \Rightarrow ssclG_A \overset{\sim }{\bigcup} ssclK_A \in SCSS(U)_A$.

Then $G_A \overset{\sim }{\subset} ssclG_A$ and $K_A \overset{\sim }{\subset} ssclK_A$ imply $G_A\overset{\sim }{\bigcup}K_A \overset{\sim }{\subset} ssclG_A \overset{\sim }{\bigcup}ssclK_A$.i.e., $ssclG_A \overset{\sim }{\bigcup}ssclK_A$ is a semiclosed set containing $G_A\overset{\sim }{\bigcup}K_A$. But $sscl(G_A \overset{\sim }{\bigcup}K_A)$ is the smallest semiclosed soft set containing $G_A\overset{\sim }{\bigcup}K_A$. Hence $sscl(G_A \overset{\sim }{\bigcup}K_A) \overset{\sim }{\subset} ssclG_A \overset{\sim }{\bigcup}ssclK_A$. 
 So, $sscl(G_A \overset{\sim }{\cup} K_A) = ssclG_A \overset{\sim }{\cup}  ssclK_A$.
\item Similar to $(ix)$.
\item We have $G_A \overset{\sim }{\bigcap} K_A \overset{\sim }{\subset} G_A$ and $G_A \overset{\sim }{\bigcap} K_A \overset{\sim }{\subset} K_A\\ \Rightarrow sscl(G_A \overset{\sim }{\bigcap} K_A) \overset{\sim }{\subset} ssclG_A$ and $sscl(G_A \overset{\sim }{\bigcap} K_A) \overset{\sim }{\subset} ssclK_A \\ \Rightarrow sscl(G_A \overset{\sim }{\bigcap} K_A) \overset{\sim }{\subset} ssclG_A\overset{\sim }{\bigcap} ssclK_A$. 
\item Similar to $(xi)$.
\item Since $ssclG_A \in SCSS(U)$ so by $(i), sscl(ssclG_A)=ssclG_A$.
\item Since $ssintG_A \in SOSS(U)$ so by $(ii), ssint(ssintG_A)=ssintG_A$.
\end{enumerate}
\end{proof}

\begin{thm}
If $G_A$ is any soft set in a soft topological space $(U_A, \tau)$ then following are equivalent: 
\begin{enumerate} [(i)]
\item $G_A$ is semiclosed soft set;
\item $(\overline{G_A})^0 \overset{\sim }{\subseteq} G_A$;
\item $\overline{(G_A^c)^0} \overset{\sim }{\supseteq} G_A^c$.
\item $G_A^c$ is semiopen soft set;

\end{enumerate}
\end{thm}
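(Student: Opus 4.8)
The plan is to prove the chain $(i)\Leftrightarrow(ii)\Leftrightarrow(iii)\Leftrightarrow(iv)$, using at each step the duality identities $(\overline{M_A})^c = (M_A^c)^0$ and $((M_A)^0)^c = \overline{M_A^c}$ together with the monotonicity of closure and interior and the fact that a closed soft set equals its own closure.

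First I would establish $(i)\Leftrightarrow(ii)$ by taking $\overline{G_A}$ itself as the candidate closed soft set. If $G_A$ is semiclosed, choose a closed soft set $K_A$ with $K_A^0 \overset{\sim}{\subseteq} G_A \overset{\sim}{\subseteq} K_A$; then $G_A \overset{\sim}{\subseteq} K_A$ gives $\overline{G_A} \overset{\sim}{\subseteq} \overline{K_A} = K_A$, whence $(\overline{G_A})^0 \overset{\sim}{\subseteq} K_A^0 \overset{\sim}{\subseteq} G_A$, which is $(ii)$. Conversely, assuming $(ii)$, the closed soft set $K_A = \overline{G_A}$ satisfies $K_A^0 = (\overline{G_A})^0 \overset{\sim}{\subseteq} G_A \overset{\sim}{\subseteq} \overline{G_A} = K_A$, so $G_A$ is semiclosed by definition.

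Next I would obtain $(ii)\Leftrightarrow(iii)$ purely by complementation. The inclusion $(\overline{G_A})^0 \overset{\sim}{\subseteq} G_A$ holds exactly when $G_A^c \overset{\sim}{\subseteq} \big((\overline{G_A})^0\big)^c$, and applying the duality identities gives $\big((\overline{G_A})^0\big)^c = \overline{(\overline{G_A})^c} = \overline{(G_A^c)^0}$. Hence $(ii)$ is equivalent to $G_A^c \overset{\sim}{\subseteq} \overline{(G_A^c)^0}$, i.e. to $\overline{(G_A^c)^0} \overset{\sim}{\supseteq} G_A^c$, which is precisely $(iii)$. Finally, $(iii)\Leftrightarrow(iv)$ follows from the dual characterization of semiopenness, namely that a soft set $L_A$ is semiopen iff $L_A \overset{\sim}{\subseteq} \overline{(L_A)^0}$: the forward direction takes a witnessing open soft set $H_A$ with $H_A \overset{\sim}{\subseteq} L_A \overset{\sim}{\subseteq} \overline{H_A}$ and notes $H_A = (H_A)^0 \overset{\sim}{\subseteq} (L_A)^0$, so $\overline{H_A} \overset{\sim}{\subseteq} \overline{(L_A)^0}$, while the reverse direction uses $H_A = (L_A)^0$. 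Setting $L_A = G_A^c$ turns this characterization into $(iii)$, closing the chain.

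The inclusion-chasing is routine; the only point demanding care is keeping the two duality identities straight and correctly reversing the direction of each inclusion when passing to complements, which is where a sign-type slip could occur.
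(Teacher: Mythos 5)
Your proof is correct, and it is organized differently from the paper's. The paper closes a single cycle $(i)\Rightarrow(ii)\Rightarrow(iii)\Rightarrow(iv)\Rightarrow(i)$: its first three implications coincide with your forward directions, but it closes the loop with $(iv)\Rightarrow(i)$, complementing a witnessing open soft set $H_A$ for $G_A^c$ into the closed witness $H_A^c$ for $G_A$. You instead prove three two-way equivalences, so you never need that step: you get $(ii)\Rightarrow(i)$ directly by taking $\overline{G_A}$ itself as the canonical closed witness, you observe that the complementation step $(ii)\Leftrightarrow(iii)$ is automatically reversible since complementation is an involution, and you close $(iii)\Leftrightarrow(iv)$ by isolating the intrinsic characterization that $L_A$ is semiopen iff $L_A \overset{\sim}{\subseteq} \overline{(L_A)^0}$. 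The paper's cycle is more economical (four implications rather than your five or six pieces), while your route buys two reusable facts -- the semiclosed characterization $(\overline{G_A})^0 \overset{\sim}{\subseteq} G_A$ with its canonical witness, and the dual semiopen characterization via $\overline{(L_A)^0}$ -- each proved independently of the others, so any single equivalence can be cited on its own. Both arguments rest on the same two duality identities, and your handling of the inclusion reversals under complementation is correct throughout.
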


\begin{proof}

$(i) \Rightarrow (ii)$ If $G_A$ is semiclosed soft set, then $\exists$ closed soft set $H_A$ such that $H_A^0 \overset{\sim }{\subseteq} G_A \overset{\sim }{\subseteq} H_A \Rightarrow H_A^0 \overset{\sim }{\subseteq} G_A \overset{\sim }{\subseteq} \overline{G_A} \overset{\sim }{\subseteq}H_A$. By the property of interior we then have $(\overline{G_A})^0 \overset{\sim }{\subseteq} H_A^0 \overset{\sim }{\subseteq} G_A$; 

$(ii) \Rightarrow (iii) (\overline{G_A})^0 \overset{\sim }{\subseteq} G_A \Rightarrow G_A^c \overset{\sim }{\subseteq} ((\overline {G_A})^0)^c = \overline{(G_A^c)^0} \overset{\sim}{\supseteq} G_A^c$.

$(iii) \Rightarrow (iv)$ $H_A = (G_A^c)^0$ is an open soft set such that $(G_A^c)^0 \overset{\sim }{\subseteq} G_A^c 
\overset{\sim }{\subseteq} \overline{(G_A^c)^0} $, hence $G_A^c$ is semiopen.

$(iv) \Rightarrow (i)$ As $G_A^c$ is semiopen $\exists$ an open soft set $H_A$ such that $H_A \overset{\sim }{\subseteq} G_A^c \overset{\sim }{\subseteq} \overline{H_A} \Rightarrow H_A^c$ is a closed soft set such that $G_A \overset{\sim }{\subseteq} H_A^c$ and  $G_A^c \overset{\sim }{\subseteq} \overline{H_A} \Rightarrow (H_A^c)^0 \overset{\sim }{\subseteq} G_A$, hence $G_A$ is semiclosed soft set. 
\end{proof}

\section{Soft Semicontinuous,  Soft Irresolute, Soft Semiopen and Soft Semoclosed Functions }
Here we introduce different types of soft functions in soft topological spaces and investigate their properties.
\begin{defn}
Let $(U_A, \tau)$ and $(U_B, \delta)$ be two soft topological spaces. A soft function $f: U_A \rightarrow U_B$ is
 said to be 
\begin{enumerate}[(i)]
\item soft semicontinuous if for each soft open set $G_B$ of $U_B$, the inverse image $f^{-1}(G_B)$ is soft semiopen set of $U_A$;
\item soft irresolute if for each soft semiopen set $G_B$ of $U_B$, the inverse image $f^{-1}(G_B)$ is soft semiopen set of $U_A$;
\item soft semiopen function if for each open soft set $G_A$ of $U_A$, the image $f(G_A)$ is semiopen soft set of $U_B$;
\item soft semiclosed function if for each closed soft set $F_A$ of $U_A$, the image $f(F_A)$ is semiclosed soft set of $U_B$.
\end{enumerate}
\end{defn}

\begin{rem}
\begin{enumerate}[(a)]
\item A soft function $f: U_A \rightarrow U_B$ is soft semicontinuous if for each soft closed set $F_B$ of $U_B$, the inverse image $f^{-1}(F_B)$ is soft semiclosed set of $U_A$.
\item A soft semicontinuous function is soft irresolute.
\end{enumerate}
\end{rem}

\begin{thm}
A soft function $f: U_A \rightarrow U_B$ is soft semicontinuous iff $f(ssclF_A) \overset{\sim}{\subseteq} \overline{f(F_A)}$ for every soft set $F_A$ of $U_A$.
\end{thm}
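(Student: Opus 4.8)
The plan is to mirror the classical topological characterization ``$f$ is continuous iff $f(\overline{A})\subseteq\overline{f(A)}$,'' but with the domain-side closure replaced by the semi closure $sscl$, since soft semicontinuity is exactly the condition that preimages of open soft sets be semiopen. The two facts I will lean on throughout are the equivalent closed-set form of semicontinuity recorded in the Remark (namely, $f$ is soft semicontinuous iff $f^{-1}(F_B)$ is semiclosed for every closed soft set $F_B$), the characterization $G_A\in SCSS(U)_A \Leftrightarrow G_A = ssclG_A$ together with the statement that $ssclG_A$ is the \emph{smallest} semiclosed soft set containing $G_A$, and the two standard soft image/preimage inclusions $F_A \overset{\sim}{\subseteq} f^{-1}(f(F_A))$ and $f(f^{-1}(G_B)) \overset{\sim}{\subseteq} G_B$.

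For the forward direction, I would assume $f$ soft semicontinuous and fix an arbitrary soft set $F_A$. The key observation is that $\overline{f(F_A)}$ is a closed soft set in $U_B$, so by the closed-set form of semicontinuity its preimage $f^{-1}(\overline{f(F_A)})$ is a \emph{semiclosed} soft set in $U_A$. Since $F_A \overset{\sim}{\subseteq} f^{-1}(f(F_A)) \overset{\sim}{\subseteq} f^{-1}(\overline{f(F_A)})$, this semiclosed soft set contains $F_A$; by minimality of the semi closure, $ssclF_A \overset{\sim}{\subseteq} f^{-1}(\overline{f(F_A)})$. Applying $f$ and using $f(f^{-1}(\cdot))\overset{\sim}{\subseteq}(\cdot)$ then yields $f(ssclF_A) \overset{\sim}{\subseteq} \overline{f(F_A)}$, as required.

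For the converse, I would assume the closure inequality holds for every soft set and verify the closed-set form of semicontinuity directly. Given a closed soft set $K_B$ in $U_B$, I set $F_A = f^{-1}(K_B)$; then $f(F_A)\overset{\sim}{\subseteq} K_B$, and because $K_B$ is closed, $\overline{f(F_A)} \overset{\sim}{\subseteq} K_B$. The hypothesis gives $f(ssclF_A) \overset{\sim}{\subseteq} \overline{f(F_A)} \overset{\sim}{\subseteq} K_B$, whence $ssclF_A \overset{\sim}{\subseteq} f^{-1}(f(ssclF_A)) \overset{\sim}{\subseteq} f^{-1}(K_B) = F_A$. Combined with the always-true inclusion $F_A \overset{\sim}{\subseteq} ssclF_A$, this forces $F_A = ssclF_A$, so $f^{-1}(K_B)$ is semiclosed by the characterization $(i)$, and $f$ is soft semicontinuous. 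The step most worth handling with care is this reverse direction, where one must justify that the soft image/preimage inclusions interact correctly with $sscl$ and that the equality $F_A=ssclF_A$ really does upgrade $F_A$ to a member of $SCSS(U)_A$; the forward direction is essentially a direct application of minimality once $f^{-1}(\overline{f(F_A)})$ is seen to be semiclosed.
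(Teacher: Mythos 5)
Your proposal is correct and follows essentially the same route as the paper: the forward direction uses the closed-set form of semicontinuity applied to $\overline{f(F_A)}$ together with minimality of $ssclF_A$, and the converse applies the hypothesis to $f^{-1}(K_B)$ for a closed $K_B$ and upgrades the resulting inclusion to $f^{-1}(K_B)=sscl(f^{-1}(K_B))$. Your write-up is in fact slightly more careful than the paper's, since you make explicit the image/preimage inclusions (e.g.\ $ssclF_A \overset{\sim}{\subseteq} f^{-1}(f(ssclF_A)) \overset{\sim}{\subseteq} f^{-1}(K_B)$) that the paper's converse leaves implicit.
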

\begin{proof}
Let $f: U_A \rightarrow U_B$ is soft semicontinuous. Now $\overline{f(F_A)}$ is a soft closed set of $U_B$, so by soft semicontinuity of $f$, $f^{-1}(\overline{f(F_A)})$ is soft semiclosed and 
$F_A \overset{\sim}{\subseteq} f^{-1}(\overline{f(F_A)})$. But $ssclF_A$ is the smallest semiclosed set containing $F_A$, hence $ssclF_A \overset{\sim}{\subseteq} f^{-1}(\overline{f(F_A)}) \Rightarrow 
f(ssclF_A) \overset{\sim}{\subseteq} \overline{f(F_A)}$.

Conversely, let $F_B$ be any soft closed set of $U_B \Rightarrow f^{-1}(F_B) \in U_A \Rightarrow 
f(sscl(f^{-1}(F_B))) \overset{\sim}{\subseteq} \overline{f(f^{-1}(F_B))} \Rightarrow f(sscl(f^{-1}(F_B))) \overset{\sim}{\subseteq} \overline{F_B} = F_B \Rightarrow sscl(f^{-1}(F_B)) = f^{-1}(F_B)$, hence is semiclosed.  
\end{proof}

\begin{thm}
A soft function $f: U_A \rightarrow U_B$ is soft semicontinuous iff $f^{-1}(H_B)^0 \overset{\sim}{\subseteq} ssint(f^{-1}(H_B))$ for every soft set $H_B$ of $U_B$.
\end{thm}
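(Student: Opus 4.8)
The plan is to treat this statement as the interior/semi-interior dual of the preceding theorem and to prove both implications directly, relying only on the definition of $ssint$ as the largest semiopen soft set contained in a given soft set, together with part $(ii)$ of the characterization theorem ($G_A \in SOSS(U)_A \Leftrightarrow G_A = ssintG_A$). Here I read $f^{-1}(H_B)^0$ as $f^{-1}(H_B^0)$, the preimage of the soft interior of $H_B$ taken in $U_B$; otherwise the asserted inclusion would hold for \emph{every} soft function and the equivalence would be vacuous, since $ssint(f^{-1}(H_B))$ already contains the soft interior of $f^{-1}(H_B)$ unconditionally.

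For the forward direction I would start from an arbitrary soft set $H_B$ of $U_B$ and observe that $H_B^0$ is an open soft set. Soft semicontinuity of $f$ then makes $f^{-1}(H_B^0)$ a semiopen soft set of $U_A$. Since $H_B^0 \overset{\sim}{\subseteq} H_B$, monotonicity of the soft preimage gives $f^{-1}(H_B^0) \overset{\sim}{\subseteq} f^{-1}(H_B)$. Thus $f^{-1}(H_B^0)$ is a semiopen soft set sitting inside $f^{-1}(H_B)$, and because $ssint(f^{-1}(H_B))$ is by definition the largest such set, I conclude $f^{-1}(H_B^0) \overset{\sim}{\subseteq} ssint(f^{-1}(H_B))$.

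For the converse I would assume the inclusion for every $H_B$ and specialize it to an arbitrary open soft set $G_B$ of $U_B$. Since $G_B$ is open, $G_B^0 = G_B$, so the hypothesis reads $f^{-1}(G_B) \overset{\sim}{\subseteq} ssint(f^{-1}(G_B))$. As the reverse inclusion $ssint(f^{-1}(G_B)) \overset{\sim}{\subseteq} f^{-1}(G_B)$ always holds, I obtain $f^{-1}(G_B) = ssint(f^{-1}(G_B))$, which by part $(ii)$ of the characterization theorem forces $f^{-1}(G_B) \in SOSS(U)_A$. As $G_B$ was an arbitrary open soft set, $f$ is soft semicontinuous.

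I do not expect a genuine obstacle here: the argument is essentially bookkeeping around the extremal property of $ssint$, and it is symmetric with the closure version proved in the previous theorem. The only points needing care are the explicit use of monotonicity of the soft preimage and the correct reading of the soft interior in $U_B$ rather than in $U_A$; both directions then hinge on the same two facts, so the proof stays short.
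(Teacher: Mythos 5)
Your proof is correct, and your reading of the ambiguous notation $f^{-1}(H_B)^0$ as $f^{-1}(H_B^0)$ (interior taken in $U_B$, then pulled back) is the right one — indeed the paper's own converse tacitly uses this reading, since it passes from $f^{-1}(G_B)^0$ to $f^{-1}(G_B)$ precisely because $G_B$ is open in $U_B$. Your converse coincides with the paper's: both specialize the hypothesis to an open $G_B$, use $G_B^0 = G_B$, and conclude $f^{-1}(G_B) = ssint(f^{-1}(G_B)) \in SOSS(U)_A$. Your forward direction, however, is genuinely different and in fact sounder than the paper's. The paper starts from a soft set $G_A$ of $U_A$, forms the open set $(f(G_A))^0$ of $U_B$, pulls it back, and asserts $f^{-1}((f(G_A))^0) \overset{\sim}{\subseteq} G_A$; but in general one only has $G_A \overset{\sim}{\subseteq} f^{-1}(f(G_A))$, so this containment fails without injectivity of $f$, and moreover the paper's argument only addresses soft sets of $U_B$ of the form $f(G_A)$ rather than arbitrary $H_B$. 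Your version — apply semicontinuity to the open set $H_B^0$, use monotonicity of the preimage to get $f^{-1}(H_B^0) \overset{\sim}{\subseteq} f^{-1}(H_B)$, and invoke the extremal property of $ssint$ — proves the stated biconditional for every soft set $H_B$ and avoids the detour through images entirely. In short: same skeleton (extremal property of $ssint$ plus semicontinuity applied to one open set), but your instantiation of it is the one that actually works.
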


\begin{proof}
Let $f: U_A \rightarrow U_B$ is soft semicontinuous. Now $(f(G_A))^0$ is a soft open set of $U_B$, so by soft semicontinuity of $f$, $f^{-1}(f(G_A))^0$ is soft semiopen and $f^{-1}(f(G_A))^0 \overset{\sim}{\subseteq} G_A$. As $ssintG_A$ is the largest soft semiopen set contained in $G_A$, $f^{-1}(f(G_A))^0 \overset{\sim}{\subseteq} ssintG_A$.

Conversely, take a soft open set $G_B \Rightarrow f^{-1}(G_B)^0 \overset{\sim}{\subseteq} ssint(f^{-1}(G_B)) \Rightarrow f^{-1}(G_B) \overset{\sim}{\subseteq} ssint(f^{-1}(G_B)) \Rightarrow f^{-1}(G_B)$ is soft semiopen.

\end{proof}

\begin{thm}
A soft function $f: U_A \rightarrow U_B$ is soft semiopen iff $f((F_A)^0) \overset{\sim}{\subseteq} ssint(f(F_A))$ for every soft set $F_A$ of $U_A$.
\end{thm}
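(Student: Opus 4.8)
The plan is to prove the two implications separately, exactly in the style of the two preceding theorems, leaning on the definition of a soft semiopen function together with the characterization $G_A \in SOSS(U)_A \Leftrightarrow G_A = ssintG_A$ established in part $(ii)$ of the earlier theorem on $ssint$ and $sscl$. The one point to keep firmly in mind throughout is that the interior $(\cdot)^0$ lives in the domain $U_A$, whereas $ssint$ and the semiopenness are all taken in the codomain $U_B$.

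For the forward direction, I would assume $f$ is soft semiopen and fix an arbitrary soft set $F_A$ over $U$. The key observation is that $(F_A)^0$ is an open soft set of $U_A$, so by the definition of soft semiopenness its image $f((F_A)^0)$ is a semiopen soft set of $U_B$. Since $(F_A)^0 \overset{\sim }{\subseteq} F_A$ gives $f((F_A)^0) \overset{\sim }{\subseteq} f(F_A)$, the set $f((F_A)^0)$ is a semiopen soft set contained in $f(F_A)$. Because $ssint(f(F_A))$ is by definition the largest semiopen soft set contained in $f(F_A)$, I conclude $f((F_A)^0) \overset{\sim }{\subseteq} ssint(f(F_A))$, as required.

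For the converse, I would assume the inclusion $f((F_A)^0) \overset{\sim }{\subseteq} ssint(f(F_A))$ holds for every soft set $F_A$ and take an arbitrary open soft set $G_A$ of $U_A$. Applying the hypothesis to $F_A = G_A$ and using $(G_A)^0 = G_A$ (valid precisely because $G_A$ is open) yields $f(G_A) \overset{\sim }{\subseteq} ssint(f(G_A))$; since the reverse inclusion $ssint(f(G_A)) \overset{\sim }{\subseteq} f(G_A)$ always holds, I obtain $f(G_A) = ssint(f(G_A))$. By part $(ii)$ of the earlier theorem applied in $U_B$, this equality means $f(G_A) \in SOSS(U)_B$, so $f$ carries open soft sets to semiopen soft sets and is therefore soft semiopen.

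No step here presents a genuine obstacle; the argument is routine once the domain/codomain bookkeeping is respected. The only place demanding care is the converse, where the reduction from an arbitrary $F_A$ to an open $G_A$ hinges on the identity $(G_A)^0 = G_A$, and where the final conclusion must invoke the $ssint$-characterization of semiopen soft sets in the codomain rather than in the domain.
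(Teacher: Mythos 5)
Your proposal is correct and follows essentially the same route as the paper: the forward direction uses that $f((F_A)^0)$ is semiopen and contained in $f(F_A)$ (the paper phrases this via $f((F_A)^0)=ssint f((F_A)^0)\overset{\sim}{\subseteq} ssint f(F_A)$, you via the ``largest semiopen subset'' property, which is the same fact), and the converse applies the hypothesis to an open $G_A$ with $(G_A)^0=G_A$. Your version merely spells out the final step ($ssint(f(G_A))\overset{\sim}{\subseteq} f(G_A)$ always, hence equality) that the paper leaves implicit.
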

\begin{proof}
If $f: U_A \rightarrow U_B$ is soft semiopen, then $f((F_A)^0)= ssintf((F_A)^0) \overset{\sim}{\subseteq} ssintf(F_A)$.

On the other hand, take a soft open set $G_A$ of $U_A$. Then by hypothesis, $f(G_A) = f((G_A)^0) \overset{\sim}{\subseteq} ssint(f(G_A)) \Rightarrow f(G_A)$ is soft semiopen in $U_B$. 
\end{proof}

\begin{thm}
Let $f: U_A \rightarrow U_B$ be soft semiopen. If $K_B$ is a soft set and $F_A$ is closed soft set containing $f^{-1}(K_B)$ then $\exists$ a semiclosed soft set $H_B$ such that $K_B \overset{\sim}{\subseteq} H_B$ and $f^{-1}(H_B) \overset{\sim}{\subseteq} F_A$.
\end{thm}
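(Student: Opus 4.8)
The plan is to exhibit the required set $H_B$ explicitly as the complement of a suitable semiopen soft image. Since $F_A$ is a closed soft set, its complement $F_A^c$ is an open soft set, and because $f$ is soft semiopen, $f(F_A^c)$ is a semiopen soft set of $U_B$. I would therefore set
\[
H_B = (f(F_A^c))^c .
\]
By the equivalence $(i)\Leftrightarrow(iv)$ of the preceding characterization theorem (a soft set is semiclosed iff its complement is semiopen), $H_B$ is a semiclosed soft set. It then remains to verify the two inclusions $K_B \overset{\sim}{\subseteq} H_B$ and $f^{-1}(H_B) \overset{\sim}{\subseteq} F_A$.

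For the second inclusion, I would use the behaviour of soft preimages under complementation, namely $f^{-1}(S_B^c) = (f^{-1}(S_B))^c$, together with the elementary inclusion $F_A^c \overset{\sim}{\subseteq} f^{-1}(f(F_A^c))$. Applying $f^{-1}$ to $H_B$ then gives $f^{-1}(H_B) = (f^{-1}(f(F_A^c)))^c \overset{\sim}{\subseteq} (F_A^c)^c = F_A$, as required.

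For the first inclusion, I would start from the hypothesis $f^{-1}(K_B) \overset{\sim}{\subseteq} F_A$. Taking complements yields $F_A^c \overset{\sim}{\subseteq} (f^{-1}(K_B))^c = f^{-1}(K_B^c)$. Applying $f$ and using the elementary inclusion $f(f^{-1}(K_B^c)) \overset{\sim}{\subseteq} K_B^c$ gives $f(F_A^c) \overset{\sim}{\subseteq} K_B^c$, and taking complements once more produces $K_B \overset{\sim}{\subseteq} (f(F_A^c))^c = H_B$. This settles both inclusions and finishes the argument.

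The main obstacle I anticipate is not conceptual but is one of bookkeeping at the level of soft functions: one must make sure that the two elementary relations $S_A \overset{\sim}{\subseteq} f^{-1}(f(S_A))$ and $f(f^{-1}(S_B)) \overset{\sim}{\subseteq} S_B$, as well as the commutation $f^{-1}(S_B^c) = (f^{-1}(S_B))^c$, actually hold for the soft image and soft preimage under the definitions in force (this is precisely where injectivity/surjectivity subtleties enter in the crisp case). Once these soft-function identities are confirmed, the proof reduces to the short chain of inclusions above with no further difficulty.
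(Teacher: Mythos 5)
Your proposal is correct and follows essentially the same route as the paper: the paper also takes $H_B = (f(F_A^c))^c$, notes that $f(F_A^c)$ is semiopen since $F_A^c$ is open and $f$ is soft semiopen, and deduces the two inclusions from $f^{-1}(K_B) \overset{\sim}{\subseteq} F_A$. You have merely filled in the elementary soft-function identities (which do hold in the directions you use them, with no injectivity or surjectivity needed) that the paper leaves implicit.
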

\begin{proof}
Take $H_B = (f(F_A^c))^c$. Now $f^{-1}(K_B) \overset{\sim}{\subseteq} F_A \Rightarrow f(F_A^c) \overset{\sim}{\subseteq} K_B^c$. Then $F_A^c open \Rightarrow f(F_A^c)$ is semiopen, so $H_B$ is semiclosed and $K_B \overset{\sim}{\subseteq} H_B$ and $f^{-1}(H_B) \overset{\sim}{\subseteq} F_A$. 
\end{proof}

\begin{thm}
A soft function $f: U_A \rightarrow U_B$ is soft semiclosed iff $ssclf(F_A) \overset{\sim}{\subseteq} f(\overline{F_A})$ for every soft set $F_A$ of $U_A$.
\end{thm}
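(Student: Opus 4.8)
The plan is to prove both implications directly, mirroring the structure of the preceding semicontinuity characterization but interchanging the roles of ordinary closure and semi closure.

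For the forward direction, I would assume $f$ is soft semiclosed and fix an arbitrary soft set $F_A$ over $U$. The crucial observation is that $\overline{F_A}$ is a closed soft set of $U_A$, so soft semiclosedness of $f$ immediately yields that $f(\overline{F_A})$ is a semiclosed soft set of $U_B$. Since $F_A \overset{\sim}{\subseteq} \overline{F_A}$ gives $f(F_A) \overset{\sim}{\subseteq} f(\overline{F_A})$, the set $f(\overline{F_A})$ is a semiclosed soft set containing $f(F_A)$. Because $sscl\, f(F_A)$ is by definition the smallest semiclosed soft set containing $f(F_A)$, I conclude $sscl\, f(F_A) \overset{\sim}{\subseteq} f(\overline{F_A})$.

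For the converse, I would assume the inclusion $sscl\, f(F_A) \overset{\sim}{\subseteq} f(\overline{F_A})$ holds for every soft set $F_A$ and verify semiclosedness straight from the definition. Taking an arbitrary closed soft set $F_A$, I use $\overline{F_A} = F_A$ to rewrite the hypothesis as $sscl\, f(F_A) \overset{\sim}{\subseteq} f(F_A)$. Combining this with the always-valid reverse inclusion $f(F_A) \overset{\sim}{\subseteq} sscl\, f(F_A)$ forces $sscl\, f(F_A) = f(F_A)$. By the characterization $G_A \in SCSS(U)_A \Leftrightarrow G_A = ssclG_A$ established earlier, this equality shows $f(F_A)$ is semiclosed, which is exactly the defining condition for $f$ to be soft semiclosed.

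The argument is essentially a bookkeeping exercise in the monotonicity and extremal (smallest/largest) properties of soft semi closure, so there is no serious obstacle. The one point that demands care is keeping track of which inclusion comes from monotonicity of $f$ and which from the minimality of $sscl$; in particular, in the converse one must remember that $f(F_A) \overset{\sim}{\subseteq} sscl\, f(F_A)$ holds unconditionally, so that the hypothesis can be upgraded from an inclusion to an equality. I would also note the parallelism with the semicontinuity theorem as a sanity check that the closure versus semi closure placement on each side is correct.
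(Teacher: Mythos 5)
Your proof is correct. The paper actually states this theorem without any proof (it closes Section 3 with the bare statement), so there is nothing to compare against; your argument is the natural one and directly parallels the paper's own proof of the semicontinuity characterization ($f(ssclF_A) \overset{\sim}{\subseteq} \overline{f(F_A)}$), with the simplification that no inverse images are needed here. Both directions are sound: the forward direction correctly combines monotonicity of $f$ with minimality of $sscl$, and the converse correctly uses $\overline{F_A}=F_A$ for closed $F_A$ together with the characterization $G_A \in SCSS(U)_A \Leftrightarrow G_A = ssclG_A$ from the paper's Theorem 2.9(i).
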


\section{Semicompact soft topological spaces}

This section is devoted to introduce semicompactness in soft topological spaces along with characterization of semicompact soft topological spaces.

\begin{defn}
A cover of a soft set is said to be a semiopen soft cover if every member of the cover is a semiopen soft set.
\end{defn}

\begin{defn}
A soft toplogical space $(U_A, \tau)$ is said to be semicompact if each semiopen soft cover of $U_A$ has a finite subcover.
\end{defn}

\begin{rem}
Every compact soft space is also semicompact.
\end{rem}
\begin{thm}
A soft topological space $(U_A, \tau)$ is semicompact $\Leftrightarrow$ each family of semiclosed soft sets with the FIP has a nonempty intersection.
\end{thm}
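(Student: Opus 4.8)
The plan is to exploit the duality between semiopen and semiclosed soft sets furnished by the preceding theorem (its equivalence $(i)\Leftrightarrow(iv)$), namely that $G_A$ is semiclosed exactly when $G_A^c$ is semiopen, together with De Morgan's laws for arbitrary families of soft sets. Both implications then reduce to complementing the family and rephrasing ``cover'' as ``null intersection'' and vice versa. Recall also that, by the definition of FIP, a family \emph{fails} the FIP precisely when some finite subfamily has null intersection.

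For the forward direction I would argue by contraposition. Suppose $\{(F_i)_A : i\in I\}$ is a family of semiclosed soft sets whose intersection is null, $\overset{\sim}{\bigcap}_{i\in I}(F_i)_A = \Phi_A$. Applying De Morgan gives $\overset{\sim}{\bigcup}_{i\in I}(F_i)_A^c = U_A$, and each $(F_i)_A^c$ is semiopen, so this is a semiopen soft cover of $U_A$. Semicompactness yields a finite subcover $(F_{i_1})_A^c,\dots,(F_{i_n})_A^c$ with union $U_A$; complementing once more gives $\overset{\sim}{\bigcap}_{k=1}^n (F_{i_k})_A = \Phi_A$. Thus the finite subfamily has null intersection, so the original family fails the FIP, which is exactly the contrapositive of the forward implication.

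For the converse I would run the same correspondence in reverse, again by contraposition. Let $\{(G_i)_A : i\in I\}$ be any semiopen soft cover of $U_A$, so $\overset{\sim}{\bigcup}_{i\in I}(G_i)_A = U_A$, whence $\overset{\sim}{\bigcap}_{i\in I}(G_i)_A^c = \Phi_A$ with each $(G_i)_A^c$ semiclosed. Since this family of semiclosed soft sets has null intersection, the hypothesis (in contrapositive form) forces it to violate the FIP: some finite subfamily $(G_{i_1})_A^c,\dots,(G_{i_n})_A^c$ satisfies $\overset{\sim}{\bigcap}_{k=1}^n (G_{i_k})_A^c = \Phi_A$. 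Complementing delivers $\overset{\sim}{\bigcup}_{k=1}^n (G_{i_k})_A = U_A$, a finite subcover, so $(U_A,\tau)$ is semicompact.

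The only real care needed is bookkeeping: one must confirm that De Morgan's identities hold for arbitrary (possibly infinite) unions and intersections of soft sets in this setting, and that complementation genuinely interchanges $SOSS(U)_A$ and $SCSS(U)_A$ --- the latter being precisely the content I cite from the earlier equivalence theorem. I expect no substantive obstacle; the statement is the routine finite-intersection-property reformulation of (semi)compactness, and the main risk is merely notational slips when passing between the ``cover'' and ``null intersection'' formulations.
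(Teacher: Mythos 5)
Your proposal is correct and follows essentially the same route as the paper's proof: both directions rest on the complementation duality between semiclosed and semiopen soft sets together with De Morgan's laws, converting covers to null intersections and back. The only difference is presentational --- you argue by contraposition where the paper argues by contradiction --- which is logically the same argument.
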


\begin{proof}
Let $\{(F_A)_\lambda~|~\lambda \in \Lambda\}$ be a collection of semiclosed soft sets with the FIP. If possible, assume 
$\underset{\lambda \in \Lambda }{\overset{\sim}{\bigcap}} (F_A)_\lambda = \Phi_A 
\Rightarrow \underset{\lambda \in \Lambda }{\overset{\sim}{\bigcup}} ((F_A)_\lambda)^c = U_A$.
So, the collection $\{((F_A)_\lambda)^c~|~ \lambda \in \Lambda\}$ forms a soft semiopen cover of $U_A$, which is semicompact. So, 
$\exists$ a finite subcllection $\Delta$ of $\Lambda$ which also covers $U_A$. i.e., $\underset{\lambda \in \Delta }{\overset{\sim}{\bigcup}} ((F_A)_\lambda)^c = U_A \Rightarrow
\underset{\lambda \in \Delta }{\overset{\sim}{\bigcap}} (F_A)_\lambda = \Phi_A$, a contradiction.

For the converse, if possible, let $(U_A, \tau)$ be not semicompact. Then $\exists$ a semiopen cover 
$\{(G_A)_\lambda ~|~ \lambda \in \Lambda\}$ of $U_A$, such that for every finite subcollection $\Delta$ of $\Lambda$ we have 
$\underset{\lambda \in \Delta }{\overset{\sim}{\bigcup}} (G_A)_\lambda \neq U_A \Rightarrow \underset{\lambda \in \Delta }{\overset{\sim}{\bigcap}} ((G_A)_\lambda)^c \neq \Phi_A $.
 Hence $\{((G_A)_\lambda)^c~|~ \lambda \in \Lambda\}$ has the FIP. So, by hypothesis $\underset{\lambda \in \Lambda}{\overset{\sim}{\bigcap}} ((G_A)_\lambda)^c \neq \Phi_A \Rightarrow
\underset{\lambda \in \Lambda }{\overset{\sim}{\bigcup}} (G_A)_\lambda \neq U_A $, a contradiction.
\end{proof}
 
\begin{thm}
A soft topological space $(U_A, \tau)$ is semicompact iff every family $\Psi$ of soft sets with the FIP, $\underset{G_A \in \Psi }{\overset{\sim}{\bigcap}} ssclG_A \neq \Phi_A$.
\end{thm}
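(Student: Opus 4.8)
The plan is to reduce this statement to the preceding theorem, which asserts that $(U_A,\tau)$ is semicompact if and only if every family of semiclosed soft sets with the FIP has a nonempty intersection. The bridge between the two characterizations is the containment $G_A \overset{\sim}{\subseteq} ssclG_A$, together with the fact established earlier (part (i) of the big soft-closure theorem) that a soft set is semiclosed precisely when it equals its own soft semi closure, i.e. $G_A \in SCSS(U)_A \Leftrightarrow G_A = ssclG_A$.

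For the forward implication, I would assume $(U_A,\tau)$ is semicompact and let $\Psi$ be any family of soft sets with the FIP. The idea is to pass to the family $\{ssclG_A ~|~ G_A \in \Psi\}$, whose members are all semiclosed by construction. The first step is to verify that this new family again has the FIP: for any finite subfamily $G_{A_1},\dots,G_{A_n}$ of $\Psi$, monotonicity of $sscl$ gives $\overset{\sim}{\bigcap}_{i} G_{A_i} \overset{\sim}{\subseteq} \overset{\sim}{\bigcap}_{i} ssclG_{A_i}$, and since the left-hand side is non-null by the FIP of $\Psi$, the right-hand side is non-null as well. Applying the preceding theorem to this family of semiclosed soft sets then yields $\overset{\sim}{\bigcap}_{G_A \in \Psi} ssclG_A \neq \Phi_A$, as required.

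For the converse, I would suppose the semi-closure condition holds, and by the preceding theorem it suffices to show that every family $\{(F_A)_\lambda ~|~ \lambda \in \Lambda\}$ of semiclosed soft sets with the FIP has nonempty intersection. Such a family is in particular a family of soft sets with the FIP, so the hypothesis applies and gives $\overset{\sim}{\bigcap}_{\lambda} sscl(F_A)_\lambda \neq \Phi_A$. Since each $(F_A)_\lambda$ is semiclosed, $sscl(F_A)_\lambda = (F_A)_\lambda$, so this intersection coincides with $\overset{\sim}{\bigcap}_{\lambda} (F_A)_\lambda$, which is therefore non-null. Hence $(U_A,\tau)$ is semicompact.

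The argument is essentially a routine translation between the two FIP formulations, and I do not expect a serious obstacle. The only step that requires care is the preservation of the FIP when each member of $\Psi$ is replaced by its semi closure in the forward direction; this is the single place where monotonicity of $sscl$ is genuinely used, and the containment $G_A \overset{\sim}{\subseteq} ssclG_A$ makes it immediate.
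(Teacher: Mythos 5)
Your proof is correct, but it takes a genuinely different route from the paper. The paper proves both directions from scratch, directly from the covering definition of semicompactness and by contradiction: for the forward direction it assumes $\underset{G_A \in \Psi}{\overset{\sim}{\bigcap}} ssclG_A = \Phi_A$, observes that $\{(ssclG_A)^c ~|~ G_A \in \Psi\}$ is then a semiopen cover of $U_A$, extracts a finite subcover, and contradicts the FIP of $\Psi$ via $(ssclG_A)^c \overset{\sim}{\subseteq} G_A^c$; for the converse it assumes non-semicompactness, takes a semiopen cover $\Upsilon$ with no finite subcover, and shows $\{G_A^c ~|~ G_A \in \Upsilon\}$ is a family with the FIP whose soft semi closures (which equal the sets themselves, since complements of semiopen sets are semiclosed) have null intersection, contradicting the hypothesis. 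You instead reduce the statement to the immediately preceding theorem (the FIP characterization via semiclosed soft sets), using only that $ssclG_A$ is the smallest semiclosed soft set containing $G_A$ and that $F_A \in SCSS(U)_A \Leftrightarrow F_A = ssclF_A$. Your reduction is shorter, avoids repeating the De Morgan cover--complement argument, needs no argument by contradiction, and makes transparent that this theorem is merely a translation of the previous one; the paper's version is self-contained and does not depend on the correctness of the earlier characterization. One small wording quibble: what you invoke as ``monotonicity of $sscl$'' in the forward direction is really just the termwise containment $G_{A_i} \overset{\sim}{\subseteq} ssclG_{A_i}$ together with monotonicity of intersection, which you in fact state; the argument itself is sound.
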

\begin{proof}
Let $(U_A, \tau)$ be semicompact and if possible let $\underset{G_A \in \Psi }{\overset{\sim}{\bigcap}} ssclG_A = \Phi_A$ for some family $\Psi$ of soft sets with the FIP. So, $\underset{G_A \in \Psi }{\overset{\sim}{\bigcup}} (ssclG_A)^c = U_A \Rightarrow \Upsilon =\{(ssclG_A)^c~|~ G_A \in \Psi \}$ is a semiopen cover of $U_A$. Then by semicompactness of $U_A$, $\exists$ a finite subcover $\omega$ of $\Upsilon $. i.e., $\underset{G_A \in \omega }{\overset{\sim}{\bigcup}} (ssclG_A)^c = U_A \Rightarrow \underset{G_A \in \omega }{\overset{\sim}{\bigcup}} G_A^c = U_A \Rightarrow \underset{G_A \in \omega }{\overset{\sim}{\bigcap}} G_A = \Phi_A $, a contradiction. Hence $\underset{G_A \in \Psi }{\overset{\sim}{\bigcap}} ssclG_A \neq \Phi_A$.

Conversely, we have $\underset{G_A \in \Psi }{\overset{\sim}{\bigcap}} ssclG_A \neq \Phi_A$,  for every family $\Psi$ of soft sets with FIP. Assume $(U_A, \tau)$ is not semicompact. Then $\exists$ a family $\Upsilon $ of semiopen soft sets covering $U$ without a finite subcover. So for every finite subfamily $\omega $ of $\Upsilon $ we have
$\underset{G_A \in \omega }{\overset{\sim}{\bigcup}} G_A \neq U_A \Rightarrow \underset{G_A \in \omega }{\overset{\sim}{\bigcap}} G_A^c \neq \Phi_A \Rightarrow \{G_A^c~|~ G_A \in \Upsilon\}$ is a family of soft sets with FIP. Now $\overset{\sim}{\underset{G_A\in \Upsilon}{\bigcup}}G_A = U_A \Rightarrow \overset{\sim}{\underset{G_A\in \Upsilon}{\bigcap}}G_A^c = \Phi_A \Rightarrow \overset{\sim}{\underset{G_A\in \Upsilon}{\bigcap}}sscl(G_A^c) = \Phi_A $, a contradiction. 
\end{proof}

\begin{thm}
Semicontinuous image of a soft semicompact space is soft compact.
\end{thm}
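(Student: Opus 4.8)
The plan is to imitate the classical argument that a continuous image of a compact space is compact, but with the roles of the hypotheses adjusted to the semi-framework at hand: semicontinuity of $f$ lets an \emph{open} cover of the target pull back to a \emph{semiopen} cover of the domain, and semicompactness of the domain then supplies a finite subcover that we push forward. Throughout I take $f\colon U_A \rightarrow U_B$ to be soft semicontinuous with $U_A$ semicompact, and I aim to show that the image $f(U_A)$, equipped with the soft subspace topology, is soft compact (if $f$ is onto, this is just $U_B$).

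First I would fix an arbitrary open soft cover $\{(G_B)_\lambda \mid \lambda \in \Lambda\}$ of $f(U_A)$, so that $f(U_A) \overset{\sim}{\subseteq} \underset{\lambda \in \Lambda}{\overset{\sim}{\bigcup}}(G_B)_\lambda$. Since $f$ is soft semicontinuous, each inverse image $f^{-1}((G_B)_\lambda)$ is a semiopen soft set of $U_A$ by the very definition of semicontinuity. The key covering step is that inverse image distributes over soft unions, which gives
\[
U_A \overset{\sim}{\subseteq} f^{-1}(f(U_A)) \overset{\sim}{\subseteq} f^{-1}\Big(\underset{\lambda \in \Lambda}{\overset{\sim}{\bigcup}}(G_B)_\lambda\Big) = \underset{\lambda \in \Lambda}{\overset{\sim}{\bigcup}} f^{-1}((G_B)_\lambda),
\]
so that $\{f^{-1}((G_B)_\lambda) \mid \lambda \in \Lambda\}$ is a semiopen soft cover of $U_A$.

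Next I would invoke semicompactness of $U_A$ to extract a finite $\Delta \subseteq \Lambda$ with $U_A = \underset{\lambda \in \Delta}{\overset{\sim}{\bigcup}} f^{-1}((G_B)_\lambda)$. Applying $f$ and using both $f(f^{-1}(S_B)) \overset{\sim}{\subseteq} S_B$ and the fact that direct image preserves soft unions yields
\[
f(U_A) = f\Big(\underset{\lambda \in \Delta}{\overset{\sim}{\bigcup}} f^{-1}((G_B)_\lambda)\Big) \overset{\sim}{\subseteq} \underset{\lambda \in \Delta}{\overset{\sim}{\bigcup}}(G_B)_\lambda,
\]
so $\{(G_B)_\lambda \mid \lambda \in \Delta\}$ is the required finite subcover and $f(U_A)$ is soft compact.

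The structural part of the argument is short, and the real care lies in the soft-set bookkeeping behind the two displayed chains: that $f^{-1}$ distributes over arbitrary soft unions and that $f(f^{-1}(S_B)) \overset{\sim}{\subseteq} S_B$ hold in the soft setting exactly as they do classically. The other point I would be deliberate about is the meaning of ``cover of $f(U_A)$'': one must read it in the soft subspace topology on $f(U_A)$, so that the $(G_B)_\lambda$ are soft sets open in that subspace; under the convenient assumption that $f$ is surjective these matters collapse, since then $f^{-1}(U_B) = U_A$ and $f(U_A) = U_B$, and the argument shows directly that $U_B$ is soft compact.
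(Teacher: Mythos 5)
Your proof is correct and follows essentially the same route as the paper's: pull back the open cover through $f^{-1}$ to obtain a semiopen cover (by semicontinuity), extract a finite subcover by semicompactness, and push it forward. The only difference is that you are more careful than the paper about the distinction between covering $U_B$ and covering $f(U_A)$ (the paper tacitly treats $f$ as surjective), which is a point in your favor rather than a divergence in method.
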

\begin{proof}
Let $f: U_A \rightarrow U_B$ be a semicontinuous function from a semicompact soft topological space $(U_A, \tau)$ to $(U_B, \delta)$. Take a soft open cover $\{(G_B)_\lambda~|~\lambda \in \Lambda\}$ of $U_B \Rightarrow \{f^{-1}((G_B)_\lambda)~|~\lambda \in \Lambda\}$ forms a soft semiopen cover of $U_A \Rightarrow \exists$ a finite subset $\Delta$ of $\Lambda$ such that $\{f^{-1}((G_B)_\lambda)~|~\lambda \in \Delta\}$ forms a semiopen cover of  $U_A \Rightarrow \{(G_B)_\lambda)~|~\lambda \in \Delta\}$ forms a finite soft opencover of $U_B$.
\end{proof}

\begin{thm}
Semiclosed subspace of a semicompact soft topological space is soft semicompact.
\end{thm}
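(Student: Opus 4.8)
The plan is to adapt the classical argument that a closed subspace of a compact space is compact, replacing open by semiopen and closed by semiclosed throughout, and exploiting the equivalence established earlier (semiclosed soft set $\Leftrightarrow$ complement is semiopen soft set).

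First I would fix a semiopen soft cover $\{(G_B)_\lambda ~|~ \lambda \in \Lambda\}$ of the subspace $U_B$, where $U_B \overset{\sim}{\subseteq} U_A$ is the given semiclosed soft set carrying the subspace topology $\tau_{U_B}$. The first key step is to express each member of the cover as the trace on $U_B$ of a semiopen soft set of the ambient space, i.e. to produce $(G_A)_\lambda \in SOSS(U)_A$ with $(G_B)_\lambda = (G_A)_\lambda \overset{\sim}{\cap} U_B$. Granting this, since $\{(G_B)_\lambda\}$ covers $U_B$ one gets $U_B \overset{\sim}{\subseteq} \overset{\sim}{\bigcup}_{\lambda} (G_A)_\lambda$.

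Next, because $U_B$ is semiclosed, the equivalence theorem gives that $U_B^c$ is semiopen. Hence the enlarged family $\{(G_A)_\lambda ~|~ \lambda \in \Lambda\} \cup \{U_B^c\}$ is a semiopen soft cover of the whole space $U_A$, since its soft union contains $U_B \overset{\sim}{\cup} U_B^c = U_A$. Invoking semicompactness of $(U_A, \tau)$, I extract a finite subcover, indexed by a finite $\Delta \subseteq \Lambda$ together possibly with $U_B^c$. Intersecting this finite subcover with $U_B$ and discarding the term $U_B^c \overset{\sim}{\cap} U_B = \Phi_A$ leaves $\{(G_B)_\lambda ~|~ \lambda \in \Delta\}$, a finite subcover of $U_B$, which establishes semicompactness of the subspace.

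The main obstacle is the first step, namely the restriction lemma asserting that every semiopen soft set of the subspace $U_B$ is the intersection with $U_B$ of some semiopen soft set of $U_A$. Unlike the open case, which is built into the definition of $\tau_{U_B}$, semiopenness is phrased via the closure operator, and soft closure does not commute with restriction in general; I would therefore prove this by unwinding the definition, writing a subspace-semiopen set $(G_B)_\lambda$ between a subspace-open soft set and its subspace-closure, lifting the open soft set to $U_A$ through $\tau_{U_B}$, and bounding the subspace closure by the ambient closure. Care is needed here, since it is precisely the semiclosedness of $U_B$ that keeps these traces under control; this is where the hypothesis genuinely enters, beyond merely guaranteeing that $U_B^c$ is semiopen.
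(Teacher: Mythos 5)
Your proposal is correct and is essentially the paper's own argument: lift each member of the subspace cover to an ambient semiopen soft set with the same trace on $U_B$, adjoin $U_B^c$ (semiopen since $U_B$ is semiclosed), extract a finite subcover of $U_A$ by semicompactness, and intersect back with $U_B$; the paper in fact states the lifting step as a bare assertion, so your version is, if anything, more careful. One correction to your closing remark, though: that restriction lemma needs no hypothesis on $U_B$ at all, since if $W_A$ is open in $U_A$ and $W_A \overset{\sim}{\bigcap} U_B \overset{\sim}{\subseteq} G_B \overset{\sim}{\subseteq} \overline{W_A \overset{\sim}{\bigcap} U_B} \overset{\sim}{\bigcap} U_B$ (which is what subspace-semiopenness of $G_B$ gives), then $T_A = G_B \overset{\sim}{\bigcup} (W_A - U_B)$ satisfies $W_A \overset{\sim}{\subseteq} T_A \overset{\sim}{\subseteq} \overline{W_A}$ and $T_A \overset{\sim}{\bigcap} U_B = G_B$; hence the semiclosedness of $U_B$ enters only in guaranteeing that $U_B^c$ is semiopen, not in controlling the traces.
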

\begin{proof}
Let $U_B$ a semiclosed subspace of a semicompact soft topological space $(U_A, \tau)$ and $\{(G_B)_\lambda~|~\lambda \in \Lambda\}$ be a semiopen cover of $U_B \Rightarrow $ for each $(G_B)_\lambda, \exists$ a semiopen soft set $G_A$ of $U_A$ such that $G_B = G_A \overset{\sim}{\bigcap} U_B$. Then the family 
$\{(G_B)_\lambda~|~\lambda \in \Lambda\} \overset{\sim}{\bigcup} (U_A -U_B)$ is a soft semi open cover of $U_A$, which has a finite subcover. So $\{(G_B)_\lambda~|~\lambda \in \Lambda\}$ has a finite subfamily to cover $U_B$. Hence $U_B$ is semicompact.
 
\end{proof}

\section{Semi Connectedness in soft topological spaces}

In this section, we introduce and study the notion of semi connectedness in a soft topological space.

\begin{defn}
 Two soft sets $F_A$ and $G_B$ are said to be disjoint if $A \cap B = \phi$ and $F(a) \cap G(b) = \phi, \forall a\in A, b\in B$.
\end{defn}

\begin{defn}
A soft semiseparation of soft topological space $(U_A, \tau)$ is a pair $F_A, G_A$ of disjoint nonnull semiopen sets
 whose union is $U_A$.

If there doesn't exist a  soft semiseparation of $U_A$, then the soft topological space is said to be soft semiconnected, otherwise soft semidisconnected
\end{defn}

\begin{thm}
 If the soft sets $H_A$ and $G_A$ form a soft semiseparation of $U_A$, and if $V_B, B \subset A$ is a soft semiconnected subspace of $U_A$,
 then $V_B \overset{\sim}{\subset} H_A$ or $V_B \overset{\sim}{\subset} G_A$.
\end{thm}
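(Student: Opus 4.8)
The plan is to transplant the classical argument that a connected subspace must lie in a single piece of a separation. Writing $V_B \overset{\sim}{\subseteq} U_A = H_A \overset{\sim}{\cup} G_A$, I would first intersect the separating pair with the subspace and set $P_B = H_A \overset{\sim}{\cap} V_B$ and $Q_B = G_A \overset{\sim}{\cap} V_B$. Because the union $H_A \overset{\sim}{\cup} G_A$ exhausts $U_A$ and $V_B$ sits inside it, distributing the intersection over the union gives $P_B \overset{\sim}{\cup} Q_B = V_B$; and since $H_A$ and $G_A$ are disjoint, read as $H_A \overset{\sim}{\cap} G_A = \Phi_A$, their traces $P_B$ and $Q_B$ are disjoint as well.

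The next step is to recognise $P_B$ and $Q_B$ as semiopen soft sets of the subspace $V_B$. Here I would use the same convention already employed in the semicompact-subspace theorem, namely that the semiopen soft sets of a subspace are exactly the traces $G_A \overset{\sim}{\cap} V_B$ of semiopen soft sets $G_A$ of the ambient space $U_A$. Since $H_A$ and $G_A$ are semiopen in $U_A$ by hypothesis, $P_B$ and $Q_B$ are semiopen in $V_B$ directly. Thus $P_B, Q_B$ is a pair of disjoint semiopen soft sets of $V_B$ whose union is $V_B$ — precisely the data of a candidate soft semiseparation of $V_B$.

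Now I would invoke semiconnectedness of $V_B$. A genuine soft semiseparation additionally requires both pieces to be nonnull, so the pair $P_B, Q_B$ can contradict semiconnectedness only if both are nonnull; hence at least one of $P_B, Q_B$ must be the null soft set. If $P_B = H_A \overset{\sim}{\cap} V_B = \Phi_A$, then $V_B = Q_B = G_A \overset{\sim}{\cap} V_B \overset{\sim}{\subseteq} G_A$; symmetrically, if $Q_B = \Phi_A$ then $V_B \overset{\sim}{\subseteq} H_A$. Either way the desired conclusion follows.

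The step I expect to be the main obstacle is the bookkeeping around the definitions rather than any deep topology. One must pin down what \emph{disjoint} means for two soft sets over the \emph{same} parameter set $A$, since the literal reading $A \cap B = \phi$ is vacuous here and has to be replaced by $H_A \overset{\sim}{\cap} G_A = \Phi_A$; and one must justify that the traces $P_B, Q_B$ are semiopen in the subspace. If one does not adopt the trace convention for subspace semiopen sets, this last point becomes genuinely delicate, because the intersection of a semiopen soft set with a subspace need not be semiopen in the subspace without an argument relating ambient closure to subspace closure. Flagging that dependence is the careful part of the proof.
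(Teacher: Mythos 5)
Your proposal is correct and follows essentially the same route as the paper's own proof: intersect $H_A$ and $G_A$ with $V_B$, observe that the traces are disjoint semiopen soft sets whose union is $V_B$, and conclude from semiconnectedness that one trace must be null, forcing $V_B$ into the other piece. The points you flag as delicate (the meaning of disjointness for soft sets over the same parameter set, and the trace convention for subspace semiopen sets) are precisely the points the paper passes over in silence, so your added care only strengthens the argument.
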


\begin{proof}
 Since $H_A$ and $G_A$ are disjoint semiopen soft sets, so are $H_A \overset{\sim}{\bigcap} V_B$ and $G_A \overset{\sim}{\bigcap} V_B$ and their soft union gives $V_B$, i.e. they would constitute a soft semiseparation of $V_B$, a contradiction.
 Hence, one of $H_A \overset{\sim}{\bigcap} V_B$ and $G_A \overset{\sim}{\bigcap} V_B$ is empty and so $V_B$ is entirely contained in on of them.
\end{proof}

\begin{thm}
 Let $V_A$ be a soft semiconnected subspace of $U_A$. If $V_A \overset{\sim}{\subset} K_A \overset{\sim}{\subset} \overline{V_A}$, then $K_A$ is also soft semiconnected. 
\end{thm}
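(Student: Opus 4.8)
The plan is to argue by contradiction, mirroring the classical fact that any set trapped between a soft semiconnected set and its closure is again soft semiconnected. Suppose $K_A$ is soft semidisconnected. Viewing $K_A$ as a subspace (itself a soft topological space under the induced topology), there is then a soft semiseparation $H_A, G_A$ of $K_A$: a pair of disjoint nonnull semiopen soft sets with $H_A \overset{\sim}{\cup} G_A = K_A$. Since $V_A \overset{\sim}{\subset} K_A$ and $V_A$ is soft semiconnected, the preceding theorem (applied within the subspace $K_A$, with $K_A$ playing the role of the ambient space) forces $V_A$ to lie entirely in one piece; without loss of generality $V_A \overset{\sim}{\subset} H_A$.

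Next I would use the hypothesis $K_A \overset{\sim}{\subset} \overline{V_A}$ to show that the remaining piece $G_A$ is null, contradicting the definition of a soft semiseparation. From $V_A \overset{\sim}{\subset} H_A$ and monotonicity of soft closure, $\overline{V_A} \overset{\sim}{\subseteq} \overline{H_A}$, hence $G_A \overset{\sim}{\subseteq} K_A \overset{\sim}{\subseteq} \overline{V_A} \overset{\sim}{\subseteq} \overline{H_A}$; so $G_A$ sits inside the soft closure of $H_A$. Because $G_A$ is semiopen there is an open soft set $W_A$ with $W_A \overset{\sim}{\subseteq} G_A \overset{\sim}{\subseteq} \overline{W_A}$, and since $G_A$ is nonnull while $\overline{\Phi_A} = \Phi_A$, the set $W_A$ must itself be nonnull. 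Now disjointness gives $W_A \overset{\sim}{\cap} H_A = \Phi_A$, so $H_A \overset{\sim}{\subseteq} W_A^c$ with $W_A^c$ closed; taking closures yields $\overline{H_A} \overset{\sim}{\subseteq} W_A^c$, i.e. $W_A \overset{\sim}{\cap} \overline{H_A} = \Phi_A$. But $W_A \overset{\sim}{\subseteq} G_A \overset{\sim}{\subseteq} \overline{H_A}$ forces $W_A \overset{\sim}{\cap} \overline{H_A} = W_A$, whence $W_A = \Phi_A$ — a contradiction. Therefore no soft semiseparation of $K_A$ can exist, and $K_A$ is soft semiconnected.

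The step I expect to be the main obstacle is the subspace bookkeeping. The semiseparation $H_A, G_A$ is defined by semiopenness relative to $K_A$, whereas the closure $\overline{V_A}$ in the hypothesis is taken in the ambient space $U_A$, so I must be careful that the open soft set $W_A$ extracted from semiopenness of $G_A$ in $K_A$, together with the closure computations, transfers correctly between the two topologies. Concretely, I would pin down the relation between soft closure computed in $K_A$ and in $U_A$ (namely $\overline{S}^{\,K_A} = \overline{S}\,\overset{\sim}{\cap}\,K_A$), so that the chain $G_A \overset{\sim}{\subseteq} K_A \overset{\sim}{\subseteq} \overline{H_A}$ and the final disjointness argument remain valid after intersecting with $K_A$. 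Once this ambient/relative closure compatibility is settled, the contradiction closes the proof.
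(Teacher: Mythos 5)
Your proposal is correct, and its skeleton matches the paper's: argue by contradiction, invoke the preceding theorem (with $K_A$ playing the role of the ambient space) to trap $V_A$ in one piece of the alleged semiseparation, and then use $K_A \overset{\sim}{\subset} \overline{V_A}$ to show the other piece is null. The difference is in how the final contradiction is reached, and there your route is genuinely different and in fact sounder. The paper works with semiclosure: from $V_A \overset{\sim}{\subset} F_A$ it gets $sscl\,V_A \overset{\sim}{\subset} sscl\,F_A$, notes that $sscl\,F_A$ is disjoint from $G_A$ (true, since $F_A$, being the complement of the semiopen piece $G_A$, is semiclosed), and then announces a contradiction with ``$G_A$ is a nonempty subset of $V_A$'' --- which should read $\overline{V_A}$, and even after that correction the argument has a gap: disjointness of $G_A$ from $sscl\,V_A$ does not contradict $G_A \overset{\sim}{\subset} \overline{V_A}$, because the semiclosure $sscl\,V_A$ is in general strictly smaller than the ordinary closure $\overline{V_A}$ appearing in the hypothesis. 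Your proof sidesteps this mismatch of operators entirely: by unwinding the semiopenness of $G_A$ into an honest open soft set $W_A$ with $W_A \overset{\sim}{\subseteq} G_A \overset{\sim}{\subseteq} \overline{W_A}$ (nonnull since $G_A$ is), every closure you take is the ordinary soft closure --- the same operator as in the hypothesis --- and the two facts $W_A \overset{\sim}{\subseteq} G_A \overset{\sim}{\subseteq} \overline{H_A}$ and $\overline{H_A} \overset{\sim}{\subseteq} W_A^c$ (from $H_A \overset{\sim}{\subseteq} W_A^c$ with $W_A^c$ closed) force $W_A = \Phi_A$, hence $G_A = \Phi_A$, a clean contradiction. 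What the paper's intended argument buys is brevity; what yours buys is a complete proof. Your closing caveat about ambient versus relative semiopenness applies equally to the paper's proof (and to its use of Theorem 5.3), which silently glosses over it; your proposed fix via $\overline{S}^{K_A} = \overline{S} \overset{\sim}{\cap} K_A$ is the right one, and the computation you outline does go through with it.
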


\begin{proof}
Let the soft set $K_A$ satisfies the hypothesis.  If possible, let $F_A$ and $G_A$ form a soft semiseparation of $K_A$.
Then by theorem 5.3, $V_A \overset{\sim}{\subset} F_A$ or $V_A \overset{\sim}{\subset} G_A$. Let $V_A \overset{\sim}{\subset} F_A 
\Rightarrow sscl(V_A) \overset{\sim}{\subset} ssclF_A$; since $ssclF_A$ and $G_A$ are disjoint, $V_A$ cannot intersect $G_A$.
This contradicts the fact that $G_A$ is a nonempty subset of $V_A$.  
\end{proof}

\begin{thm}
A soft topological space $(U_A, \tau)$ is semidisconnected $\Leftrightarrow \exists$ a nonnull proper soft subset of $U_A$ which is both semiopen and semiclosed.
\end{thm}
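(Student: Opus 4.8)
The plan is to prove the equivalence by reducing both directions to the complement characterization of Theorem 2.12, which states that a soft set is semiclosed if and only if its complement is semiopen, combined with the elementary observation that a soft semiseparation is nothing but a complementary pair of semiopen soft sets. So the whole argument should turn on recognizing that, in a semiseparation $F_A,G_A$, the conditions ``disjoint'' and ``union equals $U_A$'' together force $G_A = F_A^{c}$.

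For the forward implication I would begin with a semidisconnected space, so by Definition 5.2 there is a soft semiseparation $F_A, G_A$: a pair of disjoint nonnull semiopen soft sets with $F_A \overset{\sim}{\cup} G_A = U_A$. Arguing parameter-by-parameter, disjointness gives $F(e)\cap G(e)=\phi$ and the covering condition gives $F(e)\cup G(e)=U$ for each $e$, so $G_A = F_A^{c}$. Since $G_A$ is semiopen and $G_A = F_A^{c}$, the implication (iv)$\Rightarrow$(i) of Theorem 2.12 (applied to the soft set $F_A$) shows that $F_A$ is semiclosed; and $F_A$ is semiopen by hypothesis. Thus $F_A$ is a soft subset that is simultaneously semiopen and semiclosed. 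It is nonnull by assumption, and it is proper because $G_A = F_A^{c}$ is nonnull, i.e. $F_A \neq U_A$. Hence $F_A$ is the required nonnull proper semiopen-and-semiclosed soft subset.

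For the converse I would take a nonnull proper soft set $H_A$ that is both semiopen and semiclosed and assemble a semiseparation from $H_A$ and $H_A^{c}$. Since $H_A$ is semiclosed, the implication (i)$\Rightarrow$(iv) of Theorem 2.12 yields that $H_A^{c}$ is semiopen, and $H_A$ is semiopen by assumption; the pair $H_A, H_A^{c}$ is disjoint with $H_A \overset{\sim}{\cup} H_A^{c} = U_A$ directly from the definition of complement. Finally $H_A$ is nonnull by hypothesis, while $H_A^{c}$ is nonnull because $H_A$ is proper (so $H_A \neq U_A$). Therefore $H_A, H_A^{c}$ is a soft semiseparation of $U_A$, and $(U_A,\tau)$ is semidisconnected.

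I do not anticipate a genuine obstacle, since this is the standard ``clopen'' characterization of disconnectedness transported to the semiopen/semiclosed soft setting. The only point deserving care is the translation between the disjoint-and-covering formulation of a semiseparation in Definition 5.2 and the complementary-pair formulation used above, which requires the parameter-wise verification that two disjoint soft sets whose union is $U_A$ are exactly complements of each other; once that identification is in place, Theorem 2.12 performs all the conversions between the semiopen and semiclosed attributes.
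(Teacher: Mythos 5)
Your proposal is correct, and its core mechanism --- recognizing that a semiseparation is exactly a complementary pair of semiopen soft sets, then using the complement characterization (Theorem 2.12, semiclosed $\Leftrightarrow$ complement semiopen) to convert between semiopen and semiclosed --- is the same structural insight the paper relies on. The route differs in one notable respect: you work directly from Definition 5.2 (a pair of disjoint nonnull semiopen soft sets whose union is $U_A$), so your argument never needs the soft semiclosure operator at all. The paper's proof instead works with a closure-based notion of ``semiseparated'' sets, namely nonnull $K_A, H_A$ with $ssclK_A \overset{\sim}{\cap} H_A = \Phi_A$, $K_A \overset{\sim}{\cap} ssclH_A = \Phi_A$ and $K_A \overset{\sim}{\cup} H_A = U_A$ --- a formulation that is never actually stated as a definition in the paper and does not literally match Definition 5.2. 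Consequently the paper must do extra work: in the forward direction it derives $H_A = K_A^c$, then $K_A = (ssclH_A)^c$ and $H_A = (ssclK_A)^c$ to establish semiopenness (since semiopenness is not assumed there), and in the reverse direction it invokes $ssclK_A = K_A$, $ssclH_A = H_A$ (Theorem 2.11(i)) to verify the closure conditions. Your version is shorter, is faithful to the paper's stated definition, and avoids the paper's internal inconsistency; what it glosses over (reasonably) is that the paper's Definition 5.1 of ``disjoint'' literally requires disjoint parameter sets, which can never hold for two soft sets both over $A$ --- your parameter-wise reading $F(e) \cap G(e) = \phi$ is the only sensible one and is what the paper itself uses in practice.
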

 
\begin{proof}
Let $K_A$ be a nonnull proper soft subset of $U_A$ which is both semiopen and semiclosed. Now $H_A=(K_A)^c$ is nonnull proper subset of $U_A$ which is also both semiopen and semiclosed $\Rightarrow ssclK_A= K_A$ and $ssclH_A = H_A \Rightarrow U_A$ can be expressed as the soft union of two semiseparated soft sets $K_A, H_A$ and so is semidisconnected.

Conversely, let $U_A$ be semidisconnected$\Rightarrow \exists$ nonnull soft subsets $K_A$ and $H_A$ such that 
$ssclK_A \overset{\sim}{\bigcap} H_A = \Phi, K_A \overset{\sim}{\bigcap} ssclH_A = \Phi$ and 
$K_A \overset{\sim}{\bigcup} H_A =U_A$. Now $K_A \overset{\sim}{\subseteq} ssclK_A$ and 
$ssclK_A \overset{\sim}{\bigcap} H_A = \Phi_A \Rightarrow K_A \overset{\sim}{\bigcap} H_A = \Phi_A \Rightarrow H_A = (K_A)^c$.
Then $K_A \overset{\sim}{\bigcup} ssclH_A = U_A$ and $K_A \overset{\sim}{\bigcap} ssclH_A = \Phi_A \Rightarrow K_A = (ssclH_A)^c$ and similarly $H_A = (ssclK_A)^c \Rightarrow K_A, H_A$ are semiopen sets being the complements of semiclosed soft sets. Also $H_A = (K_A)^c \Rightarrow$ they are also semiclosed.  
 
\end{proof}

\begin{thm}
Semicontinuous image of a soft semiconnected soft topological space is soft connected.
\end{thm}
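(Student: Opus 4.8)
The plan is to argue by contradiction, exploiting the single structural fact that soft semicontinuity pulls soft \emph{open} sets back to soft \emph{semiopen} sets. This is precisely the mismatch needed to turn a soft separation downstairs into a soft semiseparation upstairs, so the whole argument is really the standard topological proof adapted to the soft setting. Throughout I take $f : U_A \rightarrow U_B$ to be the given soft semicontinuous function and read \emph{soft connected} as the evident analogue of Definition of soft semiseparation, namely the absence of a pair of disjoint nonnull \emph{open} soft sets whose union is the whole space. Without loss of generality I assume $f$ is onto its image, so that $U_B$ stands for $f(U_A)$; otherwise one passes to the image subspace.

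First I would suppose the conclusion fails, i.e. that $U_B$ admits a soft separation: a pair $H_B, G_B$ of disjoint nonnull open soft sets with $H_B \overset{\sim}{\bigcup} G_B = U_B$. The target is to manufacture a soft semiseparation of $U_A$ and so contradict its soft semiconnectedness.

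Next I would pull these two sets back along $f$. Because $H_B$ and $G_B$ are open soft sets, soft semicontinuity gives that $f^{-1}(H_B)$ and $f^{-1}(G_B)$ are soft semiopen sets of $U_A$. Using the standard commutation of inverse image with soft union and soft intersection, $f^{-1}(H_B) \overset{\sim}{\bigcup} f^{-1}(G_B) = f^{-1}(H_B \overset{\sim}{\bigcup} G_B) = f^{-1}(U_B) = U_A$ and $f^{-1}(H_B) \overset{\sim}{\bigcap} f^{-1}(G_B) = f^{-1}(H_B \overset{\sim}{\bigcap} G_B) = f^{-1}(\Phi_B) = \Phi_A$, so the two pullbacks are disjoint and together cover $U_A$.

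The step I expect to need the most care is checking that $f^{-1}(H_B)$ and $f^{-1}(G_B)$ are \emph{nonnull}, for otherwise the pair is not a genuine semiseparation. This is exactly where surjectivity onto the image enters: since $H_B$ and $G_B$ are nonnull soft subsets of $f(U_A)$, each meets the range of $f$ and hence has a nonempty preimage. Granting this, $f^{-1}(H_B)$ and $f^{-1}(G_B)$ form a soft semiseparation of $U_A$, contradicting the hypothesis that $U_A$ is soft semiconnected. Consequently no soft separation of $U_B$ can exist, and the semicontinuous image $f(U_A)$ is soft connected.
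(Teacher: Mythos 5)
Your proof is correct and follows essentially the same route as the paper: pass to the image so $f$ is surjective, suppose a soft separation of the image exists, pull both pieces back by semicontinuity to get disjoint semiopen soft sets covering $U_A$, and contradict soft semiconnectedness. The only difference is that you explicitly verify the preimages are nonnull via surjectivity, a detail the paper's proof leaves implicit.
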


\begin{proof}
Let $f: U_A \rightarrow V_B$ be a semicontinuous function from a semiconnected soft topological space $(U_A, \tau)$ to a soft topological space$(V_B, \delta)$. It suffices to consider the surjective function  $g: U_A \rightarrow f(U_A)$.
Suppose $f(U_A) = K_B \overset{\sim}{\bigcup} H_B$ be a soft separation. i.e., $K_B$ and $H_B$ are disjoint soft open sets whose union is $f(U_A) \Rightarrow f^{-1}(K_B)$ and $f^{-1}(H_B)$ are disjoint soft semiopen sets whose union is $U_A$. So, $f^{-1}(K_B)$ and $f^{-1}(H_B)$ form a soft semiseparation of $U_A$, a contradiction.
\end{proof}

\begin{thm}
Irresolute image of a soft semiconnected soft topological space is soft semiconnected.
\end{thm}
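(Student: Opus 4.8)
The plan is to mirror the structure of Theorem 5.6 (semicontinuous image of a semiconnected space is connected), but upgrade each step so that the weaker separation of the target and the stronger preservation property of an irresolute map match up. First I would reduce to the surjective case: given an irresolute function $f: U_A \rightarrow V_B$ from a soft semiconnected space $(U_A, \tau)$, it suffices to show that $f(U_A)$, viewed as a soft subspace, is soft semiconnected, so I would pass to the surjection $g: U_A \rightarrow f(U_A)$.

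Next I would argue by contradiction. Suppose $f(U_A)$ admits a soft semiseparation, i.e.\ there exist disjoint nonnull \emph{semiopen} soft sets $K_B, H_B$ in the subspace $f(U_A)$ whose soft union is $f(U_A)$. The key move is to pull these back. Because $f$ is irresolute, the inverse image of each \emph{semiopen} soft set is again semiopen; hence $f^{-1}(K_B)$ and $f^{-1}(H_B)$ are semiopen soft sets of $U_A$. Using surjectivity onto $f(U_A)$, together with $K_B \overset{\sim}{\bigcup} H_B = f(U_A)$ and the disjointness of $K_B, H_B$, I would check that $f^{-1}(K_B)$ and $f^{-1}(H_B)$ are nonnull, disjoint, and cover $U_A$. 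These two facts exhibit a soft semiseparation of $U_A$, contradicting its soft semiconnectedness, and the proof is complete.

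The step I expect to be the main obstacle is precisely the verification that the pulled-back sets $f^{-1}(K_B)$ and $f^{-1}(H_B)$ remain \emph{disjoint} and \emph{nonnull}, and together cover $U_A$. Disjointness and covering for ordinary preimages follow from the set-theoretic identities $f^{-1}(K_B) \overset{\sim}{\bigcap} f^{-1}(H_B) = f^{-1}(K_B \overset{\sim}{\bigcap} H_B)$ and $f^{-1}(K_B) \overset{\sim}{\bigcup} f^{-1}(H_B) = f^{-1}(K_B \overset{\sim}{\bigcup} H_B)$, but for soft functions one must be careful that these hold and that the disjointness notion of Definition 5.1 (matching both the parameter sets and the pointwise images) is respected. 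Nonemptiness of each preimage is exactly where surjectivity of $g$ onto $f(U_A)$ is indispensable: since $K_B$ and $H_B$ are nonnull subsets of the image, each has a genuine preimage. By contrast with Theorem 5.6, no closure estimates are needed here, since irresoluteness already delivers semiopen preimages directly; this is what makes the conclusion semiconnected rather than merely connected, and it is why the argument is in fact slightly shorter than that of the semicontinuous case.
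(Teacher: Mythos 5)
The paper states this theorem without any proof at all (it appears immediately after Theorem 5.6 as its irresolute analogue, left to the reader), so the only comparison available is with the proof of Theorem 5.6 --- and your argument is precisely the intended adaptation of that proof: pull back a soft semiseparation of $f(U_A)$ through the irresolute map to obtain disjoint, nonnull, semiopen preimages covering $U_A$, contradicting semiconnectedness. Your proof is correct at the paper's own level of rigor; the one genuine subtlety you partially flag --- that $K_B$ and $H_B$ are semiopen in the \emph{subspace} $f(U_A)$, whereas irresoluteness as defined in Definition 3.1(ii) only guarantees semiopen preimages of sets semiopen in the ambient space $U_B$ --- is glossed over identically in the paper's own proof of Theorem 5.6, so it does not represent a deviation from the paper's approach.
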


\section{Semi Separation Axioms}
 
Here we consider different types of separation axioms for a soft topological space using semiopen and semiclosed soft sets.

\begin{defn}
 A soft topological space $(U_A, \tau)$ is said to be a soft semi $T_0-$ space if for two disjoint soft points $e_G, e_F, \exists$
 a semiopen set containing one but not the other.
\end{defn}
\begin{exm}
A discrete soft topological space is a soft semi $T_0-$ space since every $e_F \in U$ is a semiopen soft set in the discrete space.
\end{exm}

\begin{thm}
 A soft subspace of a soft semi $T_0-$ space is soft semi $T_0$.
\end{thm}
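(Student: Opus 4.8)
The plan is to show that the soft semi $T_0$ property is inherited by subspaces by restricting a separating semiopen set from the whole space to the subspace. Let $(U_A,\tau)$ be a soft semi $T_0$ space and let $U_B \overset{\sim}{\subseteq} U_A$ carry the soft subspace topology ${\tau}_{U_B}$ of Definition 1.3. Take two disjoint soft points $e_G, e_F$ of the subspace, so in particular $e_G, e_F \overset{\sim}{\in} U_B$. Since these are also soft points of $U_A$ and $U_A$ is soft semi $T_0$, there exists a semiopen soft set $W_A$ of $U_A$ containing exactly one of them, say $e_G \overset{\sim}{\in} W_A$ and $e_F \overset{\sim}{\notin} W_A$.

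The key step is to produce the separating semiopen soft set \emph{inside the subspace}. I would set $W_B = W_A \overset{\sim}{\cap} U_B$ and argue two things: first, that $e_G \overset{\sim}{\in} W_B$ while $e_F \overset{\sim}{\notin} W_B$, which is immediate because $e_G, e_F$ already lie in $U_B$, so intersecting with $U_B$ does not disturb the membership status of either point; and second, that $W_B$ is a semiopen soft set in the subspace $(U_B, {\tau}_{U_B})$. This second point is the crux of the whole proof and the place where I expect the real obstacle to sit: I must verify that the restriction to a subspace of a semiopen soft set of the ambient space is again semiopen, where ``semiopen'' is now measured with respect to the subspace topology. Concretely, since $W_A \in SOSS(U)_A$, Definition 2.1 gives an open soft set $H_A$ with $H_A \overset{\sim}{\subseteq} W_A \overset{\sim}{\subseteq} \overline{H_A}$; intersecting throughout by $U_B$ should yield $H_A \overset{\sim}{\cap} U_B \overset{\sim}{\subseteq} W_B \overset{\sim}{\subseteq} \overline{H_A} \overset{\sim}{\cap} U_B$, where $H_A \overset{\sim}{\cap} U_B$ is open in $U_B$ by the very definition of the subspace topology.

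The genuinely delicate inclusion is the rightmost one: I need $\overline{H_A} \overset{\sim}{\cap} U_B \overset{\sim}{\subseteq} \overline{H_A \overset{\sim}{\cap} U_B}^{\,U_B}$, i.e.\ that the ambient soft closure intersected with the subspace is contained in the subspace closure of the restricted set. This is exactly the soft analogue of the classical relation $\overline{H}^{\,U} \cap B \subseteq \overline{H \cap B}^{\,B}$ for subspaces, and establishing it for soft sets is where the careful bookkeeping with the parameter sets $A$ and $B$ must be done. Once this containment is in hand, the chain $H_A \overset{\sim}{\cap} U_B \overset{\sim}{\subseteq} W_B \overset{\sim}{\subseteq} \overline{H_A \overset{\sim}{\cap} U_B}^{\,U_B}$ exhibits $W_B$ as semiopen in $U_B$ via the subspace-open witness $H_A \overset{\sim}{\cap} U_B$.

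With $W_B$ shown to be a semiopen soft set of the subspace containing $e_G$ but not $e_F$, the defining condition of a soft semi $T_0$ space (Definition 6.1) is met for the arbitrary disjoint pair $e_G, e_F$, so $(U_B, {\tau}_{U_B})$ is soft semi $T_0$, completing the argument. I would therefore organize the write-up as: reduce to an arbitrary disjoint pair in $U_B$, pull back the separating set from $U_A$, restrict it, and then spend the bulk of the effort on the semiopenness of the restriction — isolating, if needed, the subspace-closure containment as a short preliminary observation so that the main proof reads cleanly.
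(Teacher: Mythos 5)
Your route is the same as the paper's --- pull back a separating semiopen soft set $W_A$ from the ambient space and intersect it with the subspace --- and you correctly identify the exact point the paper passes over in silence: whether $W_A \overset{\sim}{\cap} U_B$ is semiopen \emph{in the subspace topology}. But the containment you propose to prove there, $\overline{H_A} \overset{\sim}{\cap} U_B \overset{\sim}{\subseteq} \overline{H_A \overset{\sim}{\cap} U_B}^{\,U_B}$, is not ``the soft analogue of the classical relation''; the classical subspace relation runs the opposite way, namely $\overline{H\cap B}^{\,B} = \overline{H\cap B}\cap B \subseteq \overline{H}\cap B$, and the inclusion you actually need is false in general. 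A minimal counterexample (ordinary topology, hence a soft space with a single parameter): $U=\{a,b,c\}$ with topology $\{\emptyset,\{c\},U\}$ and subspace $B=\{a,b\}$. Take $H=\{c\}$; then $\overline{H}\cap B = U\cap B = \{a,b\}$, while $H\cap B=\emptyset$, whose closure in $B$ is $\emptyset$, so the claimed inclusion collapses.

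Moreover, the gap cannot be repaired by choosing a cleverer witness: in the same example $\{a,c\}$ is semiopen in $U$ (since $\{c\}\subseteq\{a,c\}\subseteq\overline{\{c\}}=U$), yet its trace $\{a\}$ on $B$ is not semiopen in $B$, because $B$ inherits the indiscrete topology, whose only semiopen sets are $\emptyset$ and $B$. So the trace of a semiopen set on an arbitrary subspace need not be semiopen in that subspace (this is guaranteed only under extra hypotheses, e.g.\ when the subspace is open). Indeed the example shows more: the points $a$ and $b$ are separated by a semiopen set in $U$ but by none in $B$, so the hereditary claim itself fails already in the classical situation that the soft framework subsumes. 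There is therefore a genuine gap in your argument at precisely the step you flagged as delicate --- and, to be fair to you, that same unjustified step is the entire content of the paper's own proof, which simply asserts without argument that $H_A \overset{\sim}{\bigcap} V_B$ is a semiopen soft set of the subspace.
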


\begin{proof}
Let $V_B$ be a soft subspace of a soft semi $T_0-$ space $U_A$ and let $e_F, e_G$ be two distinct soft points of $V_B$. Then these soft points are also in $U_A \Rightarrow \exists$ a semiopen soft set $H_A$ containing one soft point but not the other$\Rightarrow H_A \overset{\sim}{\bigcap} V_B$ is a semiopen soft set containing one soft point but not the other.
\end{proof}

\begin{defn}
 A soft topological space $(U_A, \tau)$ is said to be a soft semi $T_1-$ space if for two distinct soft points $e_F,  e_G \in U_A, \exists$ soft semiopen sets $H_A$ and $G_A$ such that \\
 $e_F \overset{\sim}{\in} H_A$ and $e_G \overset{\sim}{\notin} H_A$;\\
 $e_G \overset{\sim}{\in} G_A$ and $e_F \overset{\sim}{\notin} G_A$.
\end{defn}

\begin{thm}
If every soft point of a soft topological space $(U_A, \tau)$ is a semiclosed soft set then $(U_A, \tau)$ is a soft semi $T_1-$ space.
\end{thm}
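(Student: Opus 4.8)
The plan is to transplant the classical characterization — a space is $T_1$ exactly when its points are closed — into the soft semitopological setting, replacing "closed/open" by "semiclosed/semiopen" throughout. The single structural ingredient needed is the earlier equivalence that a soft set is semiclosed if and only if its complement is semiopen (the $(i)\Leftrightarrow(iv)$ characterization of semiclosed soft sets).

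I would begin by fixing two distinct soft points $e_F$ and $e_G$ of $U_A$. By hypothesis every soft point is semiclosed, so in particular $e_F$ and $e_G$ are semiclosed soft sets; applying the semiclosed/semiopen-complement equivalence, both $(e_F)^c$ and $(e_G)^c$ are semiopen soft sets. I would then propose the separating sets $H_A = (e_G)^c$ and $G_A = (e_F)^c$, which are semiopen by the preceding sentence, and check the four conditions in the definition of a soft semi $T_1$-space.

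The two nonmembership requirements $e_G \overset{\sim}{\notin} H_A$ and $e_F \overset{\sim}{\notin} G_A$ should be immediate, since no soft point lies in its own complement: at the support parameter the nonempty value carried by the point cannot coincide with its set-theoretic complement. The positive requirements $e_F \overset{\sim}{\in} H_A = (e_G)^c$ and $e_G \overset{\sim}{\in} G_A = (e_F)^c$ are where the distinctness of $e_F$ and $e_G$ must be used, and I expect this to be the main obstacle. Unlike the crisp classical case, here one must unwind the definition of soft-point membership against the componentwise complement and argue that, precisely because the two soft points differ, the value carried by $e_F$ is not cancelled inside $(e_G)^c$, so that $e_F \overset{\sim}{\in}(e_G)^c$, and symmetrically $e_G \overset{\sim}{\in}(e_F)^c$. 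Once these memberships are confirmed, $H_A$ and $G_A$ witness the required soft semi $T_1$ separation of $e_F$ and $e_G$; since the pair of distinct soft points was arbitrary, $(U_A,\tau)$ is a soft semi $T_1$-space.
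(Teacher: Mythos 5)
Your proposal is correct and follows essentially the same route as the paper: by the semiclosed/semiopen complement equivalence, $(e_F)^c$ and $(e_G)^c$ are semiopen, and these complements serve as the separating sets $H_A$ and $G_A$ in the definition of a soft semi $T_1$-space. The paper simply asserts the membership and nonmembership conditions that you flag as needing verification, so your treatment is, if anything, slightly more careful on that point.
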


\begin{proof}
Let $e_F$ be a soft point of $U_A$ which is a semiclosed soft set $\Rightarrow (e_F)^c$ is a semiopen soft set. Then for distinct soft points $e_F, e_G$, we have  $(e_F)^c, (e_G)^c$ are semiopen soft sets such that 
$e_G \overset{\sim}{\in} (e_F)^c$ and $e_G \overset{\sim}{\notin} e_F$;
$e_F \overset{\sim}{\in} (e_G)^c$ and $e_F \overset{\sim}{\notin} e_G$.
\end{proof}
\begin{thm}
 A soft subspace of a soft semi $T_1-$ space is soft semi $T_1$.
\end{thm}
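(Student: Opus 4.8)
The plan is to mirror the argument used for the soft semi $T_0$ subspace theorem, but now to produce \emph{two} separating semiopen soft sets instead of one. First I would let $V_B$ be a soft subspace of a soft semi $T_1$ space $U_A$ and fix two distinct soft points $e_F, e_G$ of $V_B$. Since $V_B \overset{\sim}{\subseteq} U_A$, these are also distinct soft points of $U_A$, so the $T_1$ hypothesis on $U_A$ furnishes semiopen soft sets $H_A$ and $G_A$ with $e_F \overset{\sim}{\in} H_A$, $e_G \overset{\sim}{\notin} H_A$, and $e_G \overset{\sim}{\in} G_A$, $e_F \overset{\sim}{\notin} G_A$.

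Next I would restrict these sets to the subspace by forming $H_A \overset{\sim}{\bigcap} V_B$ and $G_A \overset{\sim}{\bigcap} V_B$. Because both soft points already lie in $V_B$, the membership relations transfer verbatim: $e_F \overset{\sim}{\in} H_A \overset{\sim}{\bigcap} V_B$ while $e_G \overset{\sim}{\notin} H_A \overset{\sim}{\bigcap} V_B$ (the latter since $H_A \overset{\sim}{\bigcap} V_B \overset{\sim}{\subseteq} H_A$), and symmetrically $e_G \overset{\sim}{\in} G_A \overset{\sim}{\bigcap} V_B$ while $e_F \overset{\sim}{\notin} G_A \overset{\sim}{\bigcap} V_B$. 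These two restricted sets then satisfy exactly the defining conditions of a soft semi $T_1$ space for the subspace, which closes the argument.

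The step that genuinely needs care is confirming that each restriction $H_A \overset{\sim}{\bigcap} V_B$ is itself semiopen \emph{in the subspace} $V_B$, rather than merely the trace of a semiopen set of $U_A$. Concretely, if $W_A$ is an open soft set witnessing $W_A \overset{\sim}{\subseteq} H_A \overset{\sim}{\subseteq} \overline{W_A}$, I would want $W_A \overset{\sim}{\bigcap} V_B$ (open in the subspace by definition of the subspace topology) to witness the semiopenness of $H_A \overset{\sim}{\bigcap} V_B$ relative to the subspace closure. This is the same hidden ingredient already invoked for the soft semi $T_0$ subspace theorem, so I would either cite that preservation fact directly or record it once as a standalone observation that semiopenness is inherited by soft subspaces; the remainder of the proof is then purely formal.
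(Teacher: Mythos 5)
Your proposal follows exactly the route the paper intends: the paper states this theorem without any proof, but its proofs of the corresponding semi $T_0$ and semi $T_2$ subspace theorems are verbatim the argument you give --- take the separating semiopen soft sets $H_A$, $G_A$ in $U_A$ and pass to the traces $H_A \overset{\sim}{\bigcap} V_B$ and $G_A \overset{\sim}{\bigcap} V_B$. The membership bookkeeping in your second paragraph is fine.

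The step you rightly flag as needing care is, however, where a genuine gap remains, and your proposed repair does not close it. To certify that $H_A \overset{\sim}{\bigcap} V_B$ is semiopen in the subspace using the witness $W_A \overset{\sim}{\bigcap} V_B$, you need $H_A \overset{\sim}{\bigcap} V_B$ to lie inside the closure of $W_A \overset{\sim}{\bigcap} V_B$ \emph{computed in} $V_B$; but from $H_A \overset{\sim}{\subseteq} \overline{W_A}$ you only get containment in the trace of $\overline{W_A}$, and the subspace closure of $W_A \overset{\sim}{\bigcap} V_B$ can be strictly smaller than $\overline{W_A} \overset{\sim}{\bigcap} V_B$ --- indeed $W_A \overset{\sim}{\bigcap} V_B$ may even be null while $H_A \overset{\sim}{\bigcap} V_B$ is not. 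Already in classical topology the trace of a semiopen set on an arbitrary subspace need not be semiopen: $[1,2)$ is semiopen in $\mathbb{R}$, yet its trace $\{1\}$ on the subspace $[0,1]$ has empty interior there and hence is not semiopen in $[0,1]$. The inheritance does hold when the subspace is open (then your witness $W_A \overset{\sim}{\bigcap} V_B$ works), so either that hypothesis or a separately proved soft inheritance lemma with suitable assumptions is required. Note that this gap is not yours alone: the paper's own semi $T_0$ and semi $T_2$ subspace proofs assert the same inheritance silently, so your write-up is, if anything, more careful than the source; but as it stands the flagged step would fail in general, for you and for the paper alike.
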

\begin{defn}
 A soft topological space $(U_A, \tau)$ is said to be a soft semi $T_2-$ space if and only if for distinct soft points $e_F, e_G \in U_A, \exists$ disjoint soft semiopen sets $H_A$ and $G_A$ such that $e_F \overset{\sim}{\in} H_A$ and $e_G \overset{\sim}{\in} G_A$.
\end{defn}

\begin{thm}
A soft subspace of a soft semi $T_2-$ space is soft semi $T_2$.
\end{thm}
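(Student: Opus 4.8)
The plan is to follow verbatim the template of the soft semi $T_0$ and soft semi $T_1$ subspace theorems proved above: take two distinct soft points of the subspace, regard them as soft points of the ambient space, separate them there using the $T_2$ hypothesis, and then push the two separating sets back down to the subspace by intersecting with it. Concretely, let $V_B$ (with $B \subseteq A$) be a soft subspace of a soft semi $T_2$ space $(U_A,\tau)$, and let $e_F, e_G$ be two distinct soft points of $V_B$. Since every soft point of $V_B$ is also a soft point of $U_A$, the soft semi $T_2$ axiom applied in $U_A$ furnishes disjoint semiopen soft sets $H_A$ and $G_A$ with $e_F \overset{\sim}{\in} H_A$ and $e_G \overset{\sim}{\in} G_A$.

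Next I would set $H_B = H_A \overset{\sim}{\bigcap} V_B$ and $G_B = G_A \overset{\sim}{\bigcap} V_B$ and check the two routine requirements. The membership conditions are immediate, since $e_F \overset{\sim}{\in} H_A$ together with $e_F \overset{\sim}{\in} V_B$ gives $e_F \overset{\sim}{\in} H_B$, and symmetrically $e_G \overset{\sim}{\in} G_B$. Disjointness is inherited directly from that of $H_A$ and $G_A$: from $H_A \overset{\sim}{\bigcap} G_A = \Phi_A$ one gets $H_B \overset{\sim}{\bigcap} G_B \overset{\sim}{\subseteq} H_A \overset{\sim}{\bigcap} G_A = \Phi_A$. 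Thus $H_B$ and $G_B$ are disjoint soft sets of $V_B$ separating $e_F$ from $e_G$, which is exactly what the soft semi $T_2$ axiom for $V_B$ demands.

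The one nonroutine point, and the step I expect to carry the real content, is verifying that $H_B$ and $G_B$ are genuinely \emph{semiopen} soft sets of the subspace $V_B$ rather than merely soft sets of $V_B$. This is a heredity property of semiopenness under the subspace construction, and it is precisely the step tacitly invoked in the soft semi $T_0$ subspace argument above. Writing a semiopen witness $O_A$ for $H_A$, namely an open soft set with $O_A \overset{\sim}{\subseteq} H_A \overset{\sim}{\subseteq} \overline{O_A}$, the soft set $O_A \overset{\sim}{\bigcap} V_B$ is open in the subspace topology $\tau_{V_B}$ by the definition of soft subspace topology and satisfies $O_A \overset{\sim}{\bigcap} V_B \overset{\sim}{\subseteq} H_B$. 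The delicate inclusion is $H_B \overset{\sim}{\subseteq} \overline{O_A \overset{\sim}{\bigcap} V_B}$ with the closure computed inside $V_B$, which I would obtain from the subspace closure identity $\overline{S}^{\,V_B} = \overline{S} \overset{\sim}{\bigcap} V_B$. I expect the genuine obstacle to be that this descent is transparent when $V_B$ is an open soft subspace but is subtle for an arbitrary subspace, so the honest way to complete the argument is to isolate and prove a heredity lemma for semiopen soft sets (or to restrict to suitably nice subspaces). Once that lemma is in hand, $H_B$ and $G_B$ are semiopen in $V_B$, and the preceding two paragraphs show they are disjoint and separate $e_F$ from $e_G$, so $V_B$ is soft semi $T_2$.
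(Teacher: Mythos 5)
Your proposal follows exactly the paper's own argument: given two distinct soft points of the subspace, separate them in the ambient space by disjoint semiopen soft sets $H_A$ and $G_A$, then pass to the traces $H_A \overset{\sim}{\bigcap} V_B$ and $G_A \overset{\sim}{\bigcap} V_B$. The heredity step you single out as the real content --- that these traces are semiopen \emph{in the subspace} --- is precisely what the paper asserts with no justification whatsoever, so your write-up is if anything more careful than the published proof; be aware, though, that the lemma you defer is genuinely needed and genuinely delicate, since (as in classical topology) the trace of a semiopen set on an arbitrary subspace need not be semiopen in that subspace, so neither your argument nor the paper's is complete without proving it or restricting to suitably nice (e.g.\ open) soft subspaces.
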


\begin{proof}
Let $(U_A, \tau)$ be a soft semi $T_2-$ space and $V_B$ be a soft subspace of $U_A$, where $B \subset A $ and $V \subset  U$. Let $e_F$ and $e_G$ be two distinct soft points of $U_B$. $U_A$ is soft semi $T_2 \Rightarrow \exists$ two disjoint soft semiopen sets $H_A$ and $G_A$
 such that $e_F \overset{\sim}{\in} H_A$, $e_G \overset{\sim}{\in} G_A$. Then 
$H_A \overset{\sim}{\bigcap} U_B$ and $G_A \overset{\sim}{\bigcap} U_B$ are semiopen softs sets satisfying the requirements for $U_B$ to be a soft semi $T_2-$ space.
\end{proof}

\begin{defn}
 A soft topological space $(U_A, \tau)$ is said to be a soft semiregular space if for every soft point $e_K$  and semiclosed soft set $F_A$ not containing $e_K, \exists$ disjoint soft semiopen sets $G_{A_1}, G_{A_2}$ such that $e_K \overset{\sim}{\in} G_{A_1}$ and $F_A \overset{\sim}{\subseteq} G_{A_2}$.

 A soft semiregular $T_1-$ space is called a soft semi $T_3-$ space, 
\end{defn}
\begin{rem}
It can be shown that the property of being soft semi $T_3$ is hereditary. 
\end{rem}

\begin{rem}
Every soft semi $T_3-$ space is soft semi $T_2-$ space, every soft semi $T_2-$ space is soft semi $T_1-$ space and every soft semi $T_1-$ space is soft semi $T_0-$ space. 
\end{rem}

\begin{thm} 
Every soft semicompact semi $T_2-$ space is soft semi $T_3$.
\end{thm}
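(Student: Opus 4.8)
The plan is to unpack the definition of soft semi $T_3$ (Definition 6.9) into its two constituents --- soft semi $T_1$ and soft semiregular --- and to dispose of the first for free. Since the space $(U_A,\tau)$ is assumed soft semi $T_2$, Remark 6.11 immediately yields that it is soft semi $T_1$. Hence the entire weight of the theorem falls on proving soft semiregularity, and this is the only place where soft semicompactness should be needed. My approach is to transcribe, into the semiopen/semiclosed soft setting, the classical argument that a compact Hausdorff space is regular.

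Concretely, I would fix a soft point $e_K$ and a semiclosed soft set $F_A$ with $e_K \overset{\sim}{\notin} F_A$, and aim to construct disjoint semiopen soft sets $G_{A_1}$ and $G_{A_2}$ with $e_K \overset{\sim}{\in} G_{A_1}$ and $F_A \overset{\sim}{\subseteq} G_{A_2}$. First I would observe that, $F_A$ being semiclosed, Theorem 4.7 makes it a semicompact subspace. Next, for each soft point $e_L \overset{\sim}{\in} F_A$ we have $e_L \neq e_K$ (else $e_K \overset{\sim}{\in} F_A$), so soft semi $T_2$ (Definition 6.7) furnishes disjoint semiopen soft sets $H_{e_L} \overset{\sim}{\ni} e_K$ and $V_{e_L} \overset{\sim}{\ni} e_L$. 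The family $\{ V_{e_L} \overset{\sim}{\bigcap} F_A \}$ is then a semiopen cover of the semicompact subspace $F_A$, so I could extract a finite subcover indexed by $e_{L_1},\dots,e_{L_n}$. I would then set $G_{A_2} = \overset{\sim}{\bigcup}_{i=1}^{n} V_{e_{L_i}}$, which contains $F_A$ and is semiopen by Theorem 2.5, and $G_{A_1} = \overset{\sim}{\bigcap}_{i=1}^{n} H_{e_{L_i}}$, which contains $e_K$ and is disjoint from $G_{A_2}$ because $H_{e_{L_i}} \overset{\sim}{\bigcap} V_{e_{L_i}} = \Phi_A$ while $G_{A_1} \overset{\sim}{\subseteq} H_{e_{L_i}}$ for each $i$.

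The hard part --- indeed the one genuinely delicate point --- is verifying that $G_{A_1}$ is itself semiopen. Theorem 2.5 only guarantees that arbitrary \emph{unions} of semiopen soft sets are semiopen, and a finite \emph{intersection} of semiopen soft sets need not be semiopen, so the candidate neighbourhood $G_{A_1}$ of $e_K$ is not admissible for free. To close this gap I would need an auxiliary lemma controlling this particular finite intersection, or I would strengthen the point side of the semi $T_2$ separation so that the sets $H_{e_L}$ can be taken genuinely open (their finite intersection then being open, hence semiopen). The semicompactness reduction and the union step are routine; everything hinges on taming this finite intersection.
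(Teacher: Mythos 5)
Your proposal reproduces the paper's own argument almost line for line: the paper likewise reduces the theorem to semiregularity, separates each soft point $e_F$ of $F_A$ from the given point $e_L$ by disjoint semiopen soft sets $(K_A)_\lambda \overset{\sim}{\ni} e_F$ and $(H_A)_\lambda \overset{\sim}{\ni} e_L$, uses semicompactness of the semiclosed soft set $F_A$ (Theorem 4.7) to extract a finite subcover, and then sets $K_A=\overset{n}{\underset{i=1}{\bigcup}}(K_A)_i$ and $H_A=\overset{n}{\underset{i=1}{\bigcap}}(H_A)_i$. The single point at which you stop is precisely the point the paper passes over in silence: it asserts that $H_A$, a \emph{finite intersection} of semiopen soft sets, is itself semiopen, with no justification whatsoever.

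So your diagnosis is correct, and the gap is genuine --- in your proposal and equally in the published proof. Nothing in the paper closes it: Theorem 2.5 gives closure of $SOSS(U)_A$ under arbitrary unions only, and its dual, Remark 2.6, concerns intersections of semiclosed sets; there is no result about finite intersections of semiopen sets, and indeed no such result can exist, since the claim already fails in ordinary topology (hence in a soft space over a one-element parameter set). In $\mathbb{R}$ with the usual topology, $[0,1]$ and $[1,2]$ are semiopen, because $(0,1)\subseteq[0,1]\subseteq\overline{(0,1)}$ and similarly for $[1,2]$, yet $[0,1]\cap[1,2]=\{1\}$ is not semiopen, as the only open set contained in it is empty. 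Your fallback repairs also do not go through as stated: soft semi $T_2$ (Definition 6.7) furnishes only \emph{semiopen} separating sets, so you cannot take the $H_{e_L}$ to be open without strengthening the hypothesis of the theorem (if they were open, the classical lemma that the intersection of an open set with a semiopen set is semiopen would indeed finish the argument). In short, you have not proved the theorem, but you have correctly located a real flaw, and it is the paper's flaw as much as yours.
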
 
 
\begin{proof}
It suffices to show every semicompact soft topological space is semiregular. Let $e_L$ be a soft point and $F_A$ be a semiclosed soft set  not containing the point $\Rightarrow e_L \overset{\sim}{\in} (F_A)^c$. Now for each soft point $e_F, \exists$ disjoint semiopen soft sets $(K_A)_1$ and $(H_A)_1$ such that
$e_F \overset{\sim}{\in} (K_A)_1$ and $e_L \overset{\sim}{\in} (H_A)_1$\\
So the collection $\{(K_A)_\lambda ~|~ \lambda \in \Lambda\}$ forms a semiopen cover of $F_A$. Now $F_A$ is a semiclosed soft set $\Rightarrow F_A$ is semicompact. Hence $\exists$ a finite subfamily $\Delta$ of $\Lambda$ such that $F_A \overset{\sim}{\subseteq} \overset{\sim}{\underset{\lambda \in \Delta}{\bigcup}} (K_A)_\lambda$.

Take $H_A = \overset{n}{\underset{i=1}{\bigcap}} (H_A)_i$ and $K_A = \overset{n}{\underset{i=1}{\bigcup}} (K_A)_i$. Then $H_A, K_A$ are disjoint semiopen sets such that $e_L$ is a soft point of $H_A$ and 
$F_A \overset{\sim}{\subseteq} K_A$.    
\end{proof}

\begin{defn}
 A soft topological space $(U_A, \tau)$ is said to be a soft seminormal space if for every pair of disjoint semiclosed soft sets $F_A$ and $K_A, \exists$ two disjoint soft semiopen sets $H_{A_1}, H_{A_2}$ such that 
$F_A \overset{\sim}{\subseteq} H_{A_1}$ and $K_A \overset{\sim}{\subseteq} H_{A_2}$.

A soft seminormal $T_1-$ space is called a soft semi $T_4-$ space. 
\end{defn}

\begin{rem}
Every soft semi $T_4-$ space is soft semi $T_3$.
\end{rem}
\begin{thm}
A soft topological space $(U_A, \tau)$ is seminormal iff for any semiclosed soft set $F_A$ and semiopen soft set $G_A$ containing $F_A$, there exists an semiopen soft set $H_A$ such that $F_A \overset{\sim}{\subset} H_A$ and $sscl(H_A) \overset{\sim}{\subset} G_A$. 
\end{thm}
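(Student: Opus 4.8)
The plan is to mirror the classical proof that normality is equivalent to the interpolation property for closed sets sitting inside open sets, working throughout with the soft semiopen/semiclosed machinery developed in Section~2. The two facts I will lean on are: first, that $G_A$ is semiclosed iff $G_A^c$ is semiopen (the four-way equivalence theorem of Section~2), so complementation interchanges the two families and reverses inclusions; and second, that a soft set is semiclosed exactly when it coincides with its own semi closure, together with the monotonicity $G_A \overset{\sim}{\subseteq} K_A \Rightarrow ssclG_A \overset{\sim}{\subseteq} ssclK_A$ and the basic containment $H_A \overset{\sim}{\subseteq} ssclH_A$.

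For the forward direction, assume $(U_A,\tau)$ is seminormal and let $F_A$ be semiclosed and $G_A$ semiopen with $F_A \overset{\sim}{\subseteq} G_A$. First I would observe that $G_A^c$ is semiclosed and that $F_A \overset{\sim}{\subseteq} G_A$ forces $F_A \overset{\sim}{\bigcap} G_A^c = \Phi_A$, so $F_A$ and $G_A^c$ are disjoint semiclosed soft sets. Seminormality then produces disjoint semiopen $H_{A_1}, H_{A_2}$ with $F_A \overset{\sim}{\subseteq} H_{A_1}$ and $G_A^c \overset{\sim}{\subseteq} H_{A_2}$. Setting $H_A = H_{A_1}$, disjointness gives $H_{A_1} \overset{\sim}{\subseteq} H_{A_2}^c$, and since $H_{A_2}^c$ is semiclosed I can pass to semi closures to obtain $ssclH_A \overset{\sim}{\subseteq} H_{A_2}^c$. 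Finally $G_A^c \overset{\sim}{\subseteq} H_{A_2}$ yields $H_{A_2}^c \overset{\sim}{\subseteq} G_A$, whence $ssclH_A \overset{\sim}{\subseteq} G_A$, which is the required conclusion.

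For the converse, assume the interpolation property and let $F_A, K_A$ be disjoint semiclosed soft sets. Then $K_A^c$ is semiopen and $F_A \overset{\sim}{\subseteq} K_A^c$, so taking $G_A = K_A^c$ the hypothesis supplies a semiopen $H_A$ with $F_A \overset{\sim}{\subseteq} H_A$ and $ssclH_A \overset{\sim}{\subseteq} K_A^c$. The candidate separating pair is $H_{A_1} = H_A$ and $H_{A_2} = (ssclH_A)^c$, the latter being semiopen because $ssclH_A$ is semiclosed. The containment $F_A \overset{\sim}{\subseteq} H_{A_1}$ is immediate, while $ssclH_A \overset{\sim}{\subseteq} K_A^c$ gives $K_A \overset{\sim}{\subseteq} (ssclH_A)^c = H_{A_2}$, and disjointness follows from $H_{A_1} \overset{\sim}{\subseteq} ssclH_A$ together with $H_{A_2} = (ssclH_A)^c$. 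This exhibits the separation demanded by seminormality and closes the equivalence.

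I expect no deep obstacle here; the argument is a structural transport of the familiar topological proof. The one place to be careful is the bookkeeping with complements, making sure each application of $(\cdot)^c$ correctly flips semiopen to semiclosed and reverses the soft inclusions, and that the disjointness in each direction is genuinely extracted from the semi-closure sandwich rather than silently assumed. A secondary subtlety is that the statement is phrased with the strict symbol $\overset{\sim}{\subset}$; I would read these as the intended soft inclusions and verify directly that the constructed sets satisfy the stated containments.
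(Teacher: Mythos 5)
Your proof is correct and follows essentially the same route as the paper's: in the forward direction both arguments separate $F_A$ from the semiclosed set $G_A^c$, use disjointness to get $H_{A_1} \overset{\sim}{\subseteq} (H_{A_2})^c$, and pass to semi closures via $sscl((H_{A_2})^c) = (H_{A_2})^c$; in the converse both take $G_A = K_A^c$ and use the pair $H_A$, $(ssclH_A)^c$ as the separating semiopen sets. No substantive difference.
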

 \begin{proof}
Let $(U_A, \tau)$ be seminormal space and $F_A$ be a semiclosed soft set and $G_A$ be a semiopen soft set containing $F_A \Rightarrow F_A $ and $(G_A)^c$ are disjoint semiclosed soft sets $\Rightarrow \exists$ two disjoint semiopen soft sets $H_{A_1}, H_{A_2}$ such that 
$F_A \overset{\sim}{\subset} H_{A_1}$ and $(G_A)^c \overset{\sim}{\subseteq} H_{A_2}$. Now 
$H_{A_1} \overset{\sim}{\subset} (H_{A_2})^c \Rightarrow ssclH_{A_1} \overset{\sim}{\subset} sscl(H_{A_2})^c =(H_{A_2})^c$ 
Also, $(G_A)^c \overset{\sim}{\subset} H_{A_2} \Rightarrow (H_{A_2})^c \overset{\sim}{\subset} G_A \Rightarrow ssclH_{A_1} \overset{\sim}{\subset}(G_A)$.

Conversely, let $L_A$ and $K_A$ be any disjoint pair semiclosed soft sets $\Rightarrow L_A \overset{\sim}{\subset} (K_A)^c $, then by hypothesis there exists an semiopen soft set $H_A$ such that $L_A \overset{\sim}{\subset} H_A$ and $ssclH_A \overset{\sim}{\subset} (K_A)^c \Rightarrow (K_A)\overset{\sim}{\subset} (ssclH_A)^c \Rightarrow (H_A)$ and $(ssclH_A)^c$ are disjoint semiopen soft sets such that $L_A \overset{\sim}{\subset} H_A$ and $K_A \overset{\sim}{\subset} (ssclH_A)^c$.
\end{proof}

\begin{thm}
Let $f: (U_A, \tau) \rightarrow (U_B, \delta)$ be a soft surjective function which is both irresolute and soft semiopen. If $U_A$ is soft seminormal space then so is $U_B$.
\end{thm}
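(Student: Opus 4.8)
The plan is to transcribe the classical proof that a closed continuous surjection preserves normality, replacing ``closed continuous'' by ``irresolute and soft semiopen'' and ``open/closed'' by ``semiopen/semiclosed'' throughout. Concretely, I would start from an arbitrary pair of disjoint semiclosed soft sets $F_B, K_B$ of $U_B$ and pull them back. Since $f$ is irresolute and, by Theorem 2.12, a soft set is semiclosed exactly when its complement is semiopen, the inverse images $f^{-1}(F_B)$ and $f^{-1}(K_B)$ are semiclosed soft sets of $U_A$. As $f^{-1}$ commutes with complementation and with intersection, $f^{-1}(F_B) \overset{\sim}{\bigcap} f^{-1}(K_B)= f^{-1}(F_B \overset{\sim}{\bigcap} K_B)=\Phi_A$, so these two preimages are disjoint.

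Next I would feed this disjoint semiclosed pair into the seminormality of $U_A$ (Definition 6.13), obtaining disjoint semiopen soft sets $H_{A_1}, H_{A_2}$ with $f^{-1}(F_B)\overset{\sim}{\subseteq}H_{A_1}$ and $f^{-1}(K_B)\overset{\sim}{\subseteq}H_{A_2}$. To make the eventual disjointness downstairs transparent, I would first route through the Urysohn-type characterization of seminormal spaces (Theorem 6.15), so that the separators can be chosen with $sscl(H_{A_1})$ still disjoint from $f^{-1}(K_B)$; this yields a semiclosed buffer $sscl(H_{A_1})$ squeezed between $f^{-1}(F_B)$ and the complement of the second separator.

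The final and decisive step is to transport these separators back to $U_B$. The only tool the paper provides for passing from $U_A$ to $U_B$ is Theorem 3.6, the tube-lemma-type statement for soft semiopen maps: given a soft set downstairs and a \emph{closed} soft set upstairs containing its preimage, it returns a semiclosed soft neighbourhood downstairs whose preimage is still trapped inside the chosen upstairs set. Applying it to $F_B$ and to $K_B$ and then complementing would, in the classical pattern, produce semiopen soft sets $V_1\overset{\sim}{\supseteq}F_B$ and $V_2\overset{\sim}{\supseteq}K_B$ with $f^{-1}(V_1)\overset{\sim}{\subseteq}H_{A_1}$ and $f^{-1}(V_2)\overset{\sim}{\subseteq}H_{A_2}$; since $H_{A_1}\overset{\sim}{\bigcap}H_{A_2}=\Phi_A$ we would get $f^{-1}(V_1\overset{\sim}{\bigcap}V_2)=\Phi_A$, and surjectivity of $f$ would then force $V_1\overset{\sim}{\bigcap}V_2=\Phi_A$, exhibiting the desired seminormal separation of $U_B$.

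I expect the real obstacle to sit exactly at this transport step. Theorem 3.6 is calibrated to an upstairs set that is \emph{closed}, precisely so that its open complement is carried to a semiopen set by the merely semiopen map $f$; but seminormality of $U_A$ delivers only \emph{semiopen} separators $H_{A_i}$, whose complements are semiclosed rather than closed, and a soft semiopen map is not assumed to carry semiclosed (or semiopen) sets anywhere useful. Closing this open-versus-semiopen gap is where the argument must do genuine work: one must either sharpen the semiopen separators produced by seminormality into honestly open ones, or upgrade Theorem 3.6 to admit a semiclosed upstairs set, and it is here that the second hypothesis on $f$ (irresoluteness) together with surjectivity has to be brought to bear. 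Once this matching of regularity classes is secured, the remainder is the purely formal bookkeeping with preimages and complements indicated above.
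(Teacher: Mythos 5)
Your write-up is a strategy, not a proof: it stops exactly at the step that matters. The two opening moves are fine and agree with the paper --- irresoluteness pulls the disjoint semiclosed pair $F_B, K_B$ back to a disjoint semiclosed pair in $U_A$ (via Theorem 2.12, preimages of semiclosed sets are semiclosed), and seminormality of $U_A$ then yields disjoint semiopen separators $H_{A_1}, H_{A_2}$ upstairs. But the theorem demands disjoint semiopen soft sets in $U_B$ containing $F_B$ and $K_B$, and you never produce them. You correctly observe that Theorem 3.6 cannot be invoked as stated, because it needs the upstairs trapping set to be \emph{closed}, while the complements of $H_{A_1}, H_{A_2}$ are only semiclosed, and a soft semiopen function (open $\mapsto$ semiopen, equivalently closed $\mapsto$ semiclosed) gives no control over images of semiclosed sets. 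Identifying this obstacle and listing two possible repairs --- sharpening the separators to genuinely open sets, or upgrading Theorem 3.6 --- without carrying out either one leaves the statement unproved; the detour through Theorem 6.15 changes nothing, since the buffer $sscl(H_{A_1})$ it provides is again only semiclosed, not closed.

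For comparison, the paper's own proof takes a different and far more direct route that bypasses Theorem 3.6 entirely: having obtained $G_A \overset{\sim}{\supseteq} f^{-1}(L_A)$ and $H_A \overset{\sim}{\supseteq} f^{-1}(M_A)$, it pushes forward, using surjectivity to get $L_A \overset{\sim}{\subset} f(G_A)$ and $M_A \overset{\sim}{\subset} f(H_A)$, and then simply declares $f(G_A)$ and $f(H_A)$ to be disjoint (semi)open soft sets of $U_B$. That declaration is precisely the regularity mismatch you flagged, now compounded by a disjointness issue: the soft semiopen hypothesis only covers images of \emph{open} soft sets, whereas $G_A, H_A$ are merely semiopen; and disjointness of $G_A$ and $H_A$ does not transfer to their images --- surjectivity makes empty \emph{preimages} detect empty sets, not empty images. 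So your diagnosis of where the genuine work lies is accurate, and it is exactly the step the paper's proof asserts without justification; but since your proposal does not overcome it either, it does not establish the theorem.
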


\begin{proof}
 Take a disjoint pair $L_A$ and $M_A$ of semiclosed soft sets of $U_B \Rightarrow f^{-1}(L_A)$ and $f^{-1}(M_A)$ are disjoint semiclosed soft sets of $U_A \Rightarrow \exists$ disjoint semiopen soft sets $G_A$ and $H_A$ such that $f^{-1}(L_A) \overset{\sim}{\subset} G_A$ and $f^{-1}(M_A) \overset{\sim}{\subset} H_A \Rightarrow L_A \overset{\sim}{\subset} f(G_A)$ and $M_A \overset{\sim}{\subset} f(H_A) \Rightarrow f(G_A)$ and $f(H_A)$ are disjoint open soft sets of $U_B$.
\end{proof}

\begin{thm}
A semiclosed soft subspace of a soft seminormal space is soft seminormal.
\end{thm}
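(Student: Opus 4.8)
The plan is to mimic the classical argument that a closed subspace of a normal space is normal, replacing ``closed/open'' by ``semiclosed/semiopen'' throughout. Let $V_B \overset{\sim}{\subseteq} U_A$ be a semiclosed soft subspace of a soft seminormal space $(U_A, \tau)$, and let $F_B, K_B$ be a disjoint pair of soft sets that are semiclosed in the subspace $V_B$. The goal is to produce disjoint semiopen soft sets of the subspace separating them.

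The first and main step is to promote semiclosedness from the subspace to the ambient space: I would show that because $V_B$ is itself semiclosed in $U_A$, any soft set which is semiclosed in the relative topology of $V_B$ is also semiclosed when regarded as a soft set of $U_A$. Granting this, $F_B$ and $K_B$ become a disjoint pair of semiclosed soft sets of $U_A$, and I can apply the seminormality of $U_A$ directly to obtain disjoint semiopen soft sets $H_{A_1}, H_{A_2}$ of $U_A$ with $F_B \overset{\sim}{\subseteq} H_{A_1}$ and $K_B \overset{\sim}{\subseteq} H_{A_2}$.

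Finally I would restrict back to the subspace by forming $H_{A_1} \overset{\sim}{\bigcap} V_B$ and $H_{A_2} \overset{\sim}{\bigcap} V_B$. As in the earlier subspace results (for instance the soft semi $T_2$ and semicompact subspace theorems), the intersection of a semiopen soft set of $U_A$ with $V_B$ is semiopen in $V_B$. These two soft sets are disjoint since $H_{A_1}$ and $H_{A_2}$ are, they still contain $F_B$ and $K_B$ respectively (as both lie in $V_B$), and hence they witness the seminormality of $V_B$.

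The main obstacle is the first step, the ``transitivity of semiclosedness.'' Unlike ordinary closedness, semiclosedness is defined through the sandwich $K_A^0 \overset{\sim}{\subseteq} F_A \overset{\sim}{\subseteq} K_A$, equivalently $(\overline{F_A})^0 \overset{\sim}{\subseteq} F_A$ by Theorem 2.10, and the interior and closure operators behave subtly under passage to a subspace. I expect the delicate part to be relating the relative closure and relative interior of $F_B$ inside $V_B$ to those computed in $U_A$, using the semiclosedness of $V_B$ so that the ambient sandwich characterization of semiclosedness is recovered. The companion fact that restricting a semiopen soft set of $U_A$ to $V_B$ yields a semiopen soft set of $V_B$ is used implicitly in the preceding subspace theorems and would be invoked here in exactly the same manner.
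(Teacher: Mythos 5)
Your route differs from the paper's at the decisive step, and in a way that favours you. The paper takes the pair $L_B, M_B$ semiclosed in the subspace and asserts that there exist \emph{disjoint} semiclosed soft sets $L_A, M_A$ of $U_A$ with $L_B = L_A \overset{\sim}{\bigcap} V_B$ and $M_B = M_A \overset{\sim}{\bigcap} V_B$; nothing forces ambient extensions to be disjoint outside $V_B$, this step is never justified, and --- tellingly --- the paper's argument never uses the hypothesis that $V_B$ is semiclosed at all. You instead promote $F_B$ and $K_B$ themselves to semiclosed soft sets of $U_A$ (transitivity of semiclosedness through the semiclosed subspace), which uses the hypothesis exactly where the classical ``closed subspace of a normal space'' proof does and makes the disjointness in $U_A$ automatic. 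That is the sounder skeleton.

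The genuine gap is that this transitivity lemma, which you yourself label the main obstacle, is announced but never proved; as written the proposal is a plan, not a proof. The lemma does hold and can be proved with the paper's own tools, namely the characterization that a soft set $G_A$ is semiclosed iff $(\overline{G_A})^0 \overset{\sim}{\subseteq} G_A$ (this is Theorem 2.12 in the paper's numbering, not Theorem 2.10 as you cite it). Indeed, since $F_B \overset{\sim}{\subseteq} V_B$, one has $(\overline{F_B})^0 \overset{\sim}{\subseteq} (\overline{V_B})^0 \overset{\sim}{\subseteq} V_B$, the last inclusion because $V_B$ is semiclosed in $U_A$. Thus $(\overline{F_B})^0$ is an open soft set of $U_A$ contained in $V_B$, hence open in the subspace topology, and it lies inside $\overline{F_B} \overset{\sim}{\bigcap} V_B$, which is the relative closure of $F_B$ in $V_B$. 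The relative semiclosedness of $F_B$ (Theorem 2.12 read inside the subspace) then forces $(\overline{F_B})^0 \overset{\sim}{\subseteq} F_B$, and Theorem 2.12 in $U_A$ shows $F_B$ is semiclosed in $U_A$. Until an argument of this kind is written down, the central claim of your proof is unsupported.

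A final caveat, which hits your last step and the paper's equally: the ``companion fact'' that $H_A \overset{\sim}{\bigcap} V_B$ is semiopen in the subspace whenever $H_A$ is semiopen in $U_A$ is false in general, even when $V_B$ is semiclosed. Take a single parameter, so soft sets are ordinary sets, and let $U = \mathbb{R}$ with the usual topology: $(0,1]$ is semiopen in $\mathbb{R}$, $V = [1,2]$ is closed and hence a semiclosed subspace, yet $(0,1] \cap [1,2] = \{1\}$ contains no nonempty relatively open set and so is not semiopen in $[1,2]$. The paper relies on this same unproved trace property here and in its other subspace theorems (e.g. Theorems 4.7, 6.3, 6.8), so your proposal is no worse off than the paper; but it does mean that the final restriction step needs a separate argument, or a different construction of the separating sets inside $V_B$, in both your proof and the paper's.
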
 
 
\begin{proof}
Let $V_B$ be a semiclosed soft subspace of a soft seminormal space $U_A$. Take a disjoint pair $L_B$ and $M_B$ of semiclosed sets of $V_B \Rightarrow \exists$ disjoint semiclosed soft sets $L_A$ and $M_A$ such that $L_B = L_A \overset{\sim}{\bigcap}V_B,  M_B = M_A \overset{\sim}{\bigcap}V_B$. Now by soft seminormality of $U_A,~ \exists$ disjoint semiopen soft sets $G_A$ and $H_A$ such that $L_A \overset{\sim}{\subset} G_A$ and $M_A \overset{\sim}{\subset} H_A \Rightarrow L_B \overset{\sim}{\subset} G_A \overset{\sim}{\bigcap}V_B $ and $M_B \overset{\sim}{\subset} H_A \overset{\sim}{\bigcap}V_B$
\end{proof}

\begin{thm}
Every soft semicompact semi $T_2-$ space is seminormal.  
\end{thm}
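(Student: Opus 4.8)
The plan is to run the classical ``semiregular $+$ compact $\Rightarrow$ normal'' argument in the soft semiopen setting, using the separation already available from the soft semi $T_3$ result. Since $(U_A,\tau)$ is soft semicompact and soft semi $T_2$, the earlier theorem that every soft semicompact semi $T_2$ space is soft semi $T_3$ shows that $(U_A,\tau)$ is soft semiregular: for every soft point $e_K$ and every semiclosed soft set not containing $e_K$ there exist disjoint semiopen soft sets, one containing $e_K$ and the other containing the semiclosed set. I would reduce the separation of two disjoint semiclosed soft sets to this point-versus-semiclosed-set separation by covering one of them point by point and thinning the resulting cover by semicompactness.

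Concretely, let $F_A$ and $K_A$ be disjoint semiclosed soft sets, so that $e_F \overset{\sim}{\notin} K_A$ for every $e_F \overset{\sim}{\in} F_A$. For each such $e_F$, semiregularity supplies disjoint semiopen soft sets $(G_A)_{e_F}$ and $(V_A)_{e_F}$ with $e_F \overset{\sim}{\in} (G_A)_{e_F}$ and $K_A \overset{\sim}{\subseteq} (V_A)_{e_F}$. The family $\{(G_A)_{e_F} \mid e_F \overset{\sim}{\in} F_A\}$ is then a semiopen soft cover of $F_A$. Because $F_A$ is semiclosed in the semicompact space $(U_A,\tau)$, the theorem that a semiclosed subspace of a semicompact soft topological space is soft semicompact gives that $F_A$ is semicompact, so this cover has a finite subcover indexed by $e_{F_1},\dots,e_{F_n}$. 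I would then put
$$H_{A_1} = \overset{n}{\underset{i=1}{\bigcup}} (G_A)_{e_{F_i}}, \qquad H_{A_2} = \overset{n}{\underset{i=1}{\bigcap}} (V_A)_{e_{F_i}}.$$
By construction $F_A \overset{\sim}{\subseteq} H_{A_1}$, while $K_A \overset{\sim}{\subseteq} (V_A)_{e_{F_i}}$ for every $i$ gives $K_A \overset{\sim}{\subseteq} H_{A_2}$. For disjointness I would distribute: $H_{A_1} \overset{\sim}{\cap} H_{A_2} = \overset{n}{\underset{i=1}{\bigcup}}\big((G_A)_{e_{F_i}} \overset{\sim}{\cap} H_{A_2}\big)$, and since $H_{A_2} \overset{\sim}{\subseteq} (V_A)_{e_{F_i}}$ each summand is contained in $(G_A)_{e_{F_i}} \overset{\sim}{\cap} (V_A)_{e_{F_i}} = \Phi_A$, whence $H_{A_1} \overset{\sim}{\cap} H_{A_2} = \Phi_A$.

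The step requiring care, and the main obstacle, is to see that $H_{A_1}$ and $H_{A_2}$ are themselves semiopen. For $H_{A_1}$ this is immediate from the theorem that an arbitrary union of semiopen soft sets is semiopen. For $H_{A_2}$ I must use that a finite intersection of semiopen soft sets is again semiopen, which is precisely the device already used in the proof that every soft semicompact semi $T_2$ space is soft semi $T_3$ (where $\overset{n}{\underset{i=1}{\bigcap}}(H_A)_i$ is taken to be semiopen). Granting this property exactly as there, the four conditions $F_A \overset{\sim}{\subseteq} H_{A_1}$, $K_A \overset{\sim}{\subseteq} H_{A_2}$, $H_{A_1} \overset{\sim}{\cap} H_{A_2} = \Phi_A$, and $H_{A_1}, H_{A_2} \in SOSS(U)_A$ are all met, so $F_A$ and $K_A$ are separated by disjoint semiopen soft sets and $(U_A,\tau)$ is soft seminormal.
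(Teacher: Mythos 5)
Your proposal is correct and follows essentially the same route as the paper's own proof: semiregularity extracted from the semi $T_3$ theorem applied pointwise over one semiclosed set, semicompactness of that set via the semiclosed-subspace theorem (Theorem 4.7), a finite subcover, and then the union/intersection construction of the two separating semiopen soft sets. If anything, your write-up is slightly more careful than the paper's: you take $H_{A_1}$ as a union where the paper's $G_A = \overset{\sim}{\bigcap}\{G_{e_{Li}}\}$ is evidently a typo for the union, and you explicitly flag the one unproved step --- that a finite intersection of semiopen soft sets is again semiopen --- which the paper uses silently both here and in the proof of the semi $T_3$ theorem.
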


\begin{proof}
Let $(U_A, \tau)$ be a semicompact semi $T_2-$ space. Take a disjoint pair $L_A$ and $M_A$ of semiclosed sets. By theorem 6.12, for each soft point $e_L, ~\exists$ disjoint semiopen soft sets $G_{e_{\scriptscriptstyle L}}$ and $H_{e_{\scriptscriptstyle L}}$ such that  $e_L \overset {\sim}{\subset} G_{e_{\scriptscriptstyle L}} $ and $M_A \overset {\sim}{\subset} H_{e_{\scriptscriptstyle L}}$. So the collection $\{G_{e_{\scriptscriptstyle Li}}~|~ e_{\scriptscriptstyle L} \overset{\sim}{\subset} G_{e_{\scriptscriptstyle L}}, i \in \Lambda \}$ is a semiopen cover of $G_{e_{\scriptscriptstyle L}}$. Then by theorem 4.7, $\exists$ a finite subfamily $\{G_{e_{\scriptscriptstyle Li}}~|~ i=1, 2, .~.~.~, n \}$ such that $G_{e_{\scriptscriptstyle L}} \overset{\sim}{\subset} \overset{\sim}{\bigcup} \{G_{e_{\scriptscriptstyle Li}}~|~ i=1, 2, .~.~.~, n \}$. Take $G_A = \overset{\sim}{\bigcap} \{G_{e_{\scriptscriptstyle Li}}~|~ i=1, 2, .~.~.~, n \}$ and $H_A = \overset{\sim}{\bigcap} \{H_{e_{\scriptscriptstyle Li}}~|~ i=1, 2, .~.~.~, n \}$. Then $G_A$ and $H_A$ are disjoint semiopen soft sets such that $L_A \overset{\sim}{\subset} G_A$ and $M_A \overset{\sim}{\subset} H_A$. Hence $U_A$ is seminormal.
\end{proof}

\end{document}